\newtheorem{theorem}{Theorem}[section]
\newtheorem{corollary}[theorem]{Corollary}
\newtheorem{definition}[theorem]{Definition}
\newtheorem{example}[theorem]{Example}
\newtheorem{lemma}[theorem]{Lemma}
\newtheorem{proposition}[theorem]{Proposition}
\newtheorem{remark}[theorem]{Remark}
\newenvironment{proof}[1][Proof]{\noindent\textbf{#1.} }{\ \rule{0.5em}{0.5em}}
\begin{document}
\author{Yulia Kempner\\Department of Computer Science\\Holon Institute of Technology, Israel\\yuliak@hit.ac.il
\and Vadim E. Levit\\Department of Computer Science \\Ariel University, Israel\\levitv@ariel.ac.il}
\date{}
\title{Cospanning characterizations of antimatroids and convex geometries }
\maketitle

\begin{abstract}

Given a finite set $E$ and an operator $\sigma:2^{E}\longrightarrow2^{E}$, two
sets $X,Y\subseteq E$ are \textit{cospanning} if $\sigma\left(  X\right)
=\sigma\left(  Y\right)  $. Corresponding \textit{cospanning equivalence
relations} were investigated for greedoids in much detail (Korte, Lovasz,
Schrader; 1991). For instance, these relations determine greedoids uniquely.
In fact, the feasible sets of a greedoid are exactly the inclusion-wise minimal sets of the equivalence classes.

In this research, we show that feasible sets of convex geometries
are the inclusion-wise maximal sets of the equivalence classes of the
corresponding closure operator. Same as greedoids, convex geometries are
uniquely defined by the corresponding cospanning relations. For each closure
operator $\sigma$, an element $x\in X$ is \textit{an extreme point} of $X$ if
$x\notin\sigma(X-x)$. The set of extreme points of $X$ is denoted by $ex(X)$.
We prove, that if $\sigma$ has the anti-exchange property, then for every set
$X$ its equivalence class $[X]_{\sigma}$ is the interval
$[ex(X),\sigma(X)]$. It results in the one-to-one correspondence between the
cospanning partitions of an antimatroid and its complementary convex geometry.

The obtained results based on the connection between violator spaces, greedoids and antimatroids.
A notion of \textit{violator space} was introduced in (G\"{a}rtner,
Matou\v{s}ek, R\"{u}st, \v{S}kovro\v{n}by; 2008) as a combinatorial framework
that encompasses linear programming and other geometric optimization problems.

Violator spaces are defined by violator operators. We have proved  that a violator operator may be
defined by a weak version of a closure operator. In the paper we prove that violator
operators generalize rank closure operators of greedoids. Further, we
introduced \textit{co-violator spaces} based on contracting operators known
also as choice functions. Cospanning characterization of these combinatorial structures allows us not only to give the new characterization of antimatroids and convex geometries, but also to obtain the new properties of closure operators, extreme point operators and
their interconnections.

\textbf{Keywords: } cospanning relation, violator space, closure space, greedoid.

\end{abstract}

\section{Preliminaries}

While convex geometries are usually defined as a closure space with anti-exchange property, antimatroids - the families of sets complementary to convex sets, are described as accessible set systems closed under union. At the same time matroids can be defined by two ways - by a closure operator with Steinitz-MacLine exchange property and as  families of independent sets. Greedoids, that are a common generalization of matroids and antimatroids, are defined as set systems with augmentation property, but they may be described by some "closure operator" as well.

This work is an attempt to describe all these structures in identical way.
Each set operator determines the partition of sets to equivalence classes with equal value of the operator. Let we have some set operator $\alpha$. Following \cite{Greedoids} we call two sets $X,Y$ to be \textit{cospanning} if $\alpha(X)=\alpha(Y)$. Thus each set operator generates the cospanning equivalence relation on sets. The cospanning relation associated with a closure operator of greedoids was introduced and investigated in \cite{Greedoids}, where it was proved that the relation determines the greedoid uniquely. In fact, the feasible sets of a greedoid are exactly the minimal  (by inclusion) sets of the equivalence classes, i.e., the sets that are not cospanning with any proper subset.
We extend the approach to other combinatorial structures. Thus convex geometries, being families of closed sets, are the maximal (by inclusion) sets of the equivalence classes of the closure operator, and matroids are the minimal sets.

 We begin with a definition of  a violator operator  which, as we prove,  is a generalization of a closure operator and a rank closure operator of greedoids. The rest of the paper is organized as follows. In Section 2, we give a brief introduction to violator spaces focussing on violator spaces with unique basis. At the end of the section we characterize the cospanning relation with regards to violator spaces and describe the equivalence classes of the relation. Section 3 is devoted to closure spaces with a focus on convex geometries, and in Section 4 we investigate the cospanning relation of antimatroids and matroids and prove one-to-one correspondence between cospanning partition of antimatroids and convex geometries.

\section{ Violator spaces}

Violator spaces are arisen
as generalization of Linear Programming problems. LP-type problems have been
introduced and analyzed by Matou\v{s}ek, Sharir and Welzl \cite{MSW, SW} as a
combinatorial framework that encompasses linear programming and other
geometric optimization problems. Further, Matou\v{s}ek et al. \cite{VS} define a simpler
framework: violator spaces, which constitute a proper generalization of
LP-type problems.  Originally, violator spaces
were defined for a set of constraints $E$, where each subset of constraints
$G\subseteq E$ was associated with $\nu(G)$ - the set of all constraints
violating $G$.

The classic example of an LP-type problem is the problem of computing the
smallest enclosing ball of a finite set of points in $R^{d}$. Here $E$ is a
set of points in $R^{d}$, and the violated constraints of some subset of the
points $G$ are exactly the points lying outside the smallest enclosing ball of
$G$.

\begin{definition}
\cite{VS} A \textit{violator space} is a pair $(E,\nu)$, where $E$ is a finite
set and $\nu$ is a mapping $2^{E}\rightarrow2^{E}$ such that for all subsets
$X,Y\subseteq E$ the following properties are satisfied:

\textbf{V11}: $X \cap\nu(X) = \emptyset$ (consistency),

\textbf{V22}: $(X\subseteq Y$ and $Y \cap\nu(X) = \emptyset) \Rightarrow
\nu(X)=\nu(Y)$ (locality).
\end{definition}

Let $(E,\nu)$ be a violator space. Define $\varphi(X)=E-\nu(X)$.
The operator $\varphi$ satisfies extensivity ($X\subseteq\varphi(X)$), self-convexity $(X\subseteq Y\subseteq\varphi(X))\Rightarrow\varphi (X)=\varphi(Y)$  and idempotence ($\varphi(\varphi(X))=\varphi(X)$)(\cite{KempnerLevit}).

In what follows, if $(E,\nu)$ is a violator space and $\varphi(X)=E-\nu(X)$,
then $(E, \varphi)$ will be called a violator space and $ \varphi$ - a violator operator as well.

\begin{definition} (\cite{KempnerLevit})
A \textit{violator space} is a pair $(E,\varphi)$, where $E$ is a finite set
and $\varphi$ is an operator $2^{E}\rightarrow2^{E}$ such that for all subsets
$X,Y\subseteq E$ the following properties are satisfied:

\textbf{V1}: $X\subseteq\varphi(X)$ (extensivity),

\textbf{V2}: $(X\subseteq Y\subseteq\varphi(X))\Rightarrow\varphi
(X)=\varphi(Y)$ (self-convexity).
\end{definition}

\begin{lemma}\label{lemma2}
If $\varphi$ satisfies extensivity and self-convexity, then for every $A,B$
$B \subseteq \varphi(A) \Leftrightarrow \varphi(A)=\varphi(A \cup B)$.
\end{lemma}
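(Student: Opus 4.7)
The plan is to prove both directions directly from axioms \textbf{V1} and \textbf{V2}, with essentially no work beyond choosing the right instances.

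For the forward direction, assume $B \subseteq \varphi(A)$. I would apply self-convexity (\textbf{V2}) with $X := A$ and $Y := A \cup B$. The inclusion $X \subseteq Y$ is immediate. The inclusion $Y \subseteq \varphi(X)$ holds because extensivity (\textbf{V1}) gives $A \subseteq \varphi(A)$, and the hypothesis gives $B \subseteq \varphi(A)$; together these yield $A \cup B \subseteq \varphi(A)$. Self-convexity then delivers $\varphi(A) = \varphi(A \cup B)$.

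For the converse, assume $\varphi(A) = \varphi(A \cup B)$. Extensivity applied to the set $A \cup B$ gives $A \cup B \subseteq \varphi(A \cup B) = \varphi(A)$, and in particular $B \subseteq \varphi(A)$.

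There is no genuine obstacle here; the content of the lemma is the recognition that \textbf{V2} can be read as an ``absorption'' principle for unions with subsets of $\varphi(A)$. I expect the main role of this lemma to be combinatorial bookkeeping later in the paper, letting one replace $\varphi(A)$ with $\varphi(A \cup B)$ (or vice versa) whenever $B$ lies inside $\varphi(A)$, which is the standard mechanism for identifying equivalence classes of the cospanning relation.
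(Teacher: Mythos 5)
Your proof is correct and follows exactly the paper's argument: the forward direction instantiates self-convexity with $X = A$, $Y = A\cup B$ after noting $A\cup B \subseteq \varphi(A)$, and the converse reads $B \subseteq A\cup B \subseteq \varphi(A\cup B) = \varphi(A)$ off extensivity. Nothing to add.
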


\begin{proof}
"\textit{If}": Let $\varphi(A)=\varphi(A \cup B)$. Since extensivity implies
$B \subseteq A \cup B \subseteq \varphi(A \cup B)$, we conclude with $B \subseteq \varphi(A)$.

"\textit{Only if}": If $B \subseteq \varphi(A)$, then $A\subseteq A\cup B \subseteq \varphi(A)$.
Hence, by self-convexity, $\varphi(A)=\varphi(A\cup B)$.
\end{proof}

\begin{corollary}
\label{Lemma} If $\varphi$ satisfies extensivity and self-convexity, then for every $X \subseteq E$ and $x \in E$
\[
(x\in \varphi(X))\Leftrightarrow(\varphi(X)=\varphi(X\cup x)).
\]
\end{corollary}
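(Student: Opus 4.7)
The plan is to derive this corollary directly as the singleton instance of Lemma~\ref{lemma2}. I would take the biconditional $B \subseteq \varphi(A) \Leftrightarrow \varphi(A) = \varphi(A \cup B)$ established there and specialize it to $A = X$ and $B = \{x\}$. Under this substitution, the hypothesis $B \subseteq \varphi(A)$ reads $\{x\} \subseteq \varphi(X)$, which is plainly equivalent to the membership statement $x \in \varphi(X)$ on the left-hand side of the corollary, while the conclusion $\varphi(A) = \varphi(A \cup B)$ becomes $\varphi(X) = \varphi(X \cup x)$, matching the right-hand side verbatim under the standard abuse of notation $X \cup x = X \cup \{x\}$.

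Because the substitution is purely formal and Lemma~\ref{lemma2} already carries the full content (extensivity drives the ``if'' direction and self-convexity drives the ``only if'' direction), no obstacle arises: the axioms \textbf{V1} and \textbf{V2} need not be reinvoked here. If one preferred a self-contained argument instead of citing the lemma, it would amount to copying the two short implications from Lemma~\ref{lemma2}, only with the two-element chain $X \subseteq X \cup x \subseteq \varphi(X)$ in place of $A \subseteq A \cup B \subseteq \varphi(A)$. Either route yields a one-line proof, so the main interest of the statement lies not in its derivation but in its later use as a convenient element-wise criterion for membership in $\varphi(X)$.
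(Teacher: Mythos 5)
Your proposal is correct and matches the paper's intent exactly: the statement is presented as a corollary of Lemma~\ref{lemma2} with no separate proof, precisely because it is the specialization $A=X$, $B=\{x\}$ that you describe. Nothing further is needed.
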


\begin{proposition}
\label{SC_G}
For each extensive operator $\varphi$
self-convexity is equivalent to the following property:
\textbf{VV2}: $(X\subseteq \varphi(Y) \wedge Y\subseteq \varphi(X))\Rightarrow\varphi
(X)=\varphi(Y)$.
\end{proposition}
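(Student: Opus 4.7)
The plan is to prove the two implications separately, with extensivity assumed throughout. Both directions are short; the content is essentially just combining extensivity with the hypotheses to match the premise of the other property.

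For the implication \textbf{V2}$\Rightarrow$\textbf{VV2}, I would start from hypotheses $X\subseteq\varphi(Y)$ and $Y\subseteq\varphi(X)$ and route through $X\cup Y$. By extensivity, $X\subseteq\varphi(X)$, so together with $Y\subseteq\varphi(X)$ we get $X\subseteq X\cup Y\subseteq\varphi(X)$. Self-convexity then yields $\varphi(X)=\varphi(X\cup Y)$. Symmetrically, $Y\subseteq X\cup Y\subseteq\varphi(Y)$, so $\varphi(Y)=\varphi(X\cup Y)$. Combining, $\varphi(X)=\varphi(Y)$. (Equivalently, one can invoke Lemma~\ref{lemma2} twice, once with $(A,B)=(X,Y)$ and once with $(A,B)=(Y,X)$, which packages exactly this sandwich argument.)

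For the reverse implication \textbf{VV2}$\Rightarrow$\textbf{V2}, assume $X\subseteq Y\subseteq\varphi(X)$. The inclusion $Y\subseteq\varphi(X)$ is one of the two hypotheses of \textbf{VV2}. The other, $X\subseteq\varphi(Y)$, follows immediately from $X\subseteq Y$ together with the extensivity inclusion $Y\subseteq\varphi(Y)$. Applying \textbf{VV2} then gives $\varphi(X)=\varphi(Y)$, which is self-convexity.

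There is no real obstacle here: the proof is a bookkeeping exercise showing that the symmetric formulation \textbf{VV2} collapses to the chain formulation \textbf{V2} once extensivity is available, because extensivity automatically supplies the ``missing'' inclusion $X\subseteq\varphi(Y)$ whenever $X\subseteq Y$. The only thing to be careful about is not to use idempotence or any other consequence beyond extensivity and the stated axioms, since the statement is phrased purely in terms of extensivity plus one of the two self-convexity variants.
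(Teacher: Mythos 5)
Your proof is correct and follows essentially the same route as the paper: the forward direction is exactly the sandwich argument packaged in Lemma~\ref{lemma2} (applied to both orderings of $X$ and $Y$, as you note), and the reverse direction uses extensivity to supply $X\subseteq\varphi(Y)$ from $X\subseteq Y$, just as in the paper. No issues.
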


\begin{proof}
1. \textbf{V2} $\Rightarrow$ \textbf{VV2}: From Lemma \ref{lemma2}  $(X\subseteq \varphi(Y) \wedge Y\subseteq \varphi(X))\Rightarrow \varphi(Y)= \varphi(X \cup Y)= \varphi(X)$.  Hence, $\varphi(X)=\varphi(Y)$.

2. \textbf{VV2} $\Rightarrow$ \textbf{V2}: $(X\subseteq Y\subseteq\varphi(X))\Rightarrow X\subseteq \varphi(Y) \wedge Y\subseteq \varphi(X)$ (from \textbf{V1}). Then \textbf{VV2} implies $\varphi
(X)=\varphi(Y)$.
\end{proof}

\begin{lemma}(\cite{KempnerLevit})
\label{un} Let $(E,\varphi)$ be a violator space. Then
\begin{equation*}
\varphi(X)=\varphi(Y)\Rightarrow\varphi(X\cup Y)=\varphi(X)=\varphi(Y)
\label{Union}%
\end{equation*}
and
\begin{equation*}
(X \subseteq Y \subseteq Z) \wedge (\varphi(X)=\varphi(Z)) \Rightarrow \varphi(X)=\varphi(Y)=\varphi(Z)
\label{Convexity}%
\end{equation*}
for every $X,Y,Z\subseteq E$.
\end{lemma}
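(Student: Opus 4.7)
The plan is to derive both implications directly from Lemma \ref{lemma2} and axiom \textbf{V2}, so no heavy lifting is needed. For the first implication, I would start from the hypothesis $\varphi(X)=\varphi(Y)$ and use extensivity (\textbf{V1}) to observe that $Y\subseteq\varphi(Y)=\varphi(X)$. Then applying Lemma \ref{lemma2} with $A=X$ and $B=Y$ immediately gives $\varphi(X)=\varphi(X\cup Y)$, and combining with the hypothesis yields $\varphi(X)=\varphi(Y)=\varphi(X\cup Y)$.

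For the second implication, I would assume $X\subseteq Y\subseteq Z$ and $\varphi(X)=\varphi(Z)$. Extensivity gives $Z\subseteq\varphi(Z)$, so $Y\subseteq Z\subseteq\varphi(Z)=\varphi(X)$, hence $X\subseteq Y\subseteq\varphi(X)$. Now self-convexity (\textbf{V2}) applies directly and delivers $\varphi(X)=\varphi(Y)$, which chained with the hypothesis gives the desired triple equality.

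I do not foresee a real obstacle here, since both parts are essentially one-line consequences of machinery already established in the preceding lemma and corollary; the only care needed is to invoke extensivity to bridge the set-theoretic inclusions $Y\subseteq\varphi(X)$ (for part 1) and $Y\subseteq\varphi(X)$ (for part 2) before applying Lemma \ref{lemma2} or axiom \textbf{V2} respectively. The proof will therefore be short and purely formal.
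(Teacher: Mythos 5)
Your proof is correct: both implications follow exactly as you describe, the first from extensivity plus Lemma \ref{lemma2} (with $A=X$, $B=Y$), the second from extensivity plus \textbf{V2} directly. The paper itself defers this lemma to the cited reference rather than proving it, but your argument is the natural one from the machinery already set up and needs no changes.
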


Since the second property deals with
all sets lying between two given sets, following \cite{Monjardet} we call the property  \textit{convexity}.

\subsection{Uniquely generated violator spaces}

Let $(E,\alpha)$ be an arbitrary space with the operator $\alpha:2^{E}%
\rightarrow2^{E}$. $B\subseteq E$ is a \textit{generator} of
$X\subseteq E$ if $\alpha(B)=\alpha(X)$. For $X\subseteq E$, a \textit{basis}
(minimal generator) of $X$ is a inclusion-minimal set $B\subseteq E$ (not
necessarily included in $X$) with $\alpha(B)=\alpha(X)$. A space $(E,\alpha)$
is \textit{uniquely generated} if every set $X\subseteq E$ has a unique basis.

\begin{proposition}\cite{KempnerLevit}
\label{UQP} A violator space $(E,\varphi)$ is uniquely generated if and only if
for every $X,Y\subseteq E$
\begin{equation}
\varphi(X)=\varphi(Y)\Rightarrow\varphi(X\cap Y)=\varphi(X)=\varphi(Y) \label{UQ}%
\end{equation}

\end{proposition}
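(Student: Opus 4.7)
The plan is to handle both implications separately, with the \emph{only if} direction carrying the main difficulty.

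For the \emph{if} direction, I would assume \eqref{UQ} and show every $X$ has at most one basis (existence is automatic by finiteness of $E$). Given two bases $B_1, B_2$ of $X$, both satisfy $\varphi(B_i) = \varphi(X)$, so \eqref{UQ} applied to the pair forces $\varphi(B_1 \cap B_2) = \varphi(B_1) = \varphi(X)$. The inclusion-minimality of $B_1$ among generators of $X$ then gives $B_1 \cap B_2 = B_1$, i.e., $B_1 \subseteq B_2$; symmetry finishes the argument.

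For the \emph{only if} direction, I would assume unique generation and first establish the intermediate claim that the unique basis $B$ of $X$ lies inside $X$. The idea is to pick an inclusion-minimal subset $B' \subseteq X$ with $\varphi(B') = \varphi(X)$ (such a $B'$ exists since $X$ itself is a generator) and then argue that $B'$ is in fact a basis, i.e., minimal among \emph{all} generators, not merely among subsets of $X$: any generator $C \subseteq B'$ automatically satisfies $C \subseteq X$, so the in-$X$ minimality of $B'$ forces $C = B'$. Uniqueness then identifies $B' = B$, yielding $B \subseteq X$. With this claim in hand the conclusion is short: since $\varphi(X) = \varphi(Y)$, the sets $X$ and $Y$ share the same generators, hence the same unique basis $B$, and the claim places $B$ in both $X$ and $Y$, hence in $X \cap Y$; the chain $B \subseteq X \cap Y \subseteq X \subseteq \varphi(X) = \varphi(B)$ together with self-convexity \textbf{V2} then delivers $\varphi(X \cap Y) = \varphi(B) = \varphi(X)$.

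The main obstacle is precisely the intermediate claim that the unique basis sits inside its set. Because a basis is defined as an inclusion-minimal generator among \emph{all} subsets of $E$, it may a priori contain elements outside $X$; only the uniqueness assumption, combined with the minimality argument carried out inside $X$, rules this out. Without this step there is no obvious way to relate $X \cap Y$ to the common basis, and the chain driving the final conclusion collapses.
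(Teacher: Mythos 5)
Your proof is correct. The paper itself gives no proof of Proposition \ref{UQP} (it is quoted from \cite{KempnerLevit}), but your argument is the standard one and is sound: the \emph{if} direction via intersecting two bases, and the \emph{only if} direction via the key intermediate claim that in a uniquely generated violator space the unique basis of $X$ is contained in $X$ (obtained by taking a generator of $X$ that is inclusion-minimal among subsets of $X$ and observing it is automatically minimal among all generators), after which $B \subseteq X \cap Y \subseteq \varphi(B)$ and self-convexity \textbf{V2} finish the job.
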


We can rewrite the property (\ref{UQ}) as follows: for every set $X\subseteq
E$ of a uniquely generated violator space $(E,\varphi)$, the basis $B$ of $X$ is
the intersection of all generators of $X$:
\begin{equation}
B=\bigcap\{Y\subseteq E:\varphi(Y)=\varphi(X)\}. \label{UQI}%
\end{equation}

One of the known examples of a not uniquely generated violator space is the
violator space associated with the smallest enclosing ball problem. A basis of
a set of points is a minimal subset with the same enclosing ball. In
particular, all points of the basis are located on the ball's boundary. For
$R^{2}$ the set $X$ of the four corners of a square has two bases: the two
pairs of diagonally opposite points. Moreover, one of these pairs is a basis
of the second pair. Thus the equality (\ref{UQI}) does not hold.

It is known that a closure operator $\tau$ is uniquely generated if and only if it
satisfies the \textit{anti-exchange property} \cite{Edelman, Greedoids, CL}:
\[
p,q\notin\tau(X)\wedge p\in\tau(X\cup{q})\Rightarrow q\notin\tau(X\cup{p}).
\]
We extended this characterization to violator spaces.

\begin{theorem}\cite{KempnerLevit}
\label{TH} Let $(E,\varphi)$ be a violator space. Then $(E,\varphi)$ is uniquely
generated if and only if the operator $\varphi$ satisfies the anti-exchange property.
\end{theorem}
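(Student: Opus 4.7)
The plan is to pivot through Proposition~\ref{UQP}, which rephrases unique generation as the implication $\varphi(X)=\varphi(Y)\Rightarrow\varphi(X\cap Y)=\varphi(X)$, and then to use Corollary~\ref{Lemma} repeatedly to swap between the forms $x\in\varphi(X)$ and $\varphi(X)=\varphi(X\cup x)$. Self-convexity \textbf{V2} will also be needed to compute $\varphi$-values of sets sandwiched between some $Z$ and $\varphi(Z)$.

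For the direction unique generation $\Rightarrow$ anti-exchange, I will argue by contradiction. Assuming distinct $p,q\notin\varphi(X)$ with $p\in\varphi(X\cup q)$ and $q\in\varphi(X\cup p)$, two applications of Corollary~\ref{Lemma} give $\varphi(X\cup p)=\varphi(X\cup\{p,q\})=\varphi(X\cup q)$. The intersection condition then forces $\varphi(X)=\varphi((X\cup p)\cap(X\cup q))=\varphi(X\cup p)$, and a final appeal to Corollary~\ref{Lemma} yields $p\in\varphi(X)$, a contradiction. This direction is essentially a short chase through definitions.

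The substantive direction is anti-exchange $\Rightarrow$ unique generation. Given $\varphi(X)=\varphi(Y)$, I will choose a set $Z$ that is inclusion-minimal among those with $X\cap Y\subseteq Z\subseteq X$ and $\varphi(Z)=\varphi(X)$, and aim to show $Z=X\cap Y$. Suppose for contradiction some $p\in Z\setminus(X\cap Y)$; since $Z\subseteq X$, this forces $p\in X\setminus Y$. Minimality of $Z$ together with Corollary~\ref{Lemma} gives $p\notin\varphi(Z\setminus p)$; setting $A=Z\setminus p$, I have $A\subseteq X\subseteq\varphi(Y)$, so \textbf{V2} applied to $Y\subseteq A\cup Y\subseteq\varphi(Y)$ yields $\varphi(A\cup Y)=\varphi(Y)$ and hence $p\in\varphi(A\cup Y)$. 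Enumerating $Y\setminus A$ as $y_1,\dots,y_k$ and taking the smallest index $i$ with $p\in\varphi(A\cup\{y_1,\dots,y_i\})$, I set $q=y_i$ and $A'=A\cup\{y_1,\dots,y_{i-1}\}$.

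The main obstacle is verifying that this configuration really triggers anti-exchange and then using its conclusion to reach a contradiction. Minimality of $i$ gives $p\notin\varphi(A')$; and if $q\in\varphi(A')$, then Corollary~\ref{Lemma} would give $\varphi(A')=\varphi(A'\cup q)$, so $p\notin\varphi(A'\cup q)$, contradicting the choice of $i$. Thus anti-exchange applies and delivers $q\notin\varphi(A'\cup p)$. To convert this into a contradiction, I will observe that $Z=A\cup p\subseteq A'\cup p\subseteq Z\cup Y\subseteq\varphi(Z)$, using $Y\subseteq\varphi(Y)=\varphi(Z)$; then \textbf{V2} forces $\varphi(A'\cup p)=\varphi(Z)=\varphi(Y)$, so $q\in Y\subseteq\varphi(A'\cup p)$, contradicting what anti-exchange just gave. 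This completes the argument via Proposition~\ref{UQP}.
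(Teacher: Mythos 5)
Your proof is correct. Note that the paper itself does not prove Theorem~\ref{TH}; it imports the result from \cite{KempnerLevit}, so there is no in-paper argument to compare against. Judged on its own, your argument is sound: the easy direction is exactly the definitional chase one expects (Corollary~\ref{Lemma} twice to get $\varphi(X\cup p)=\varphi(X\cup\{p,q\})=\varphi(X\cup q)$, then the intersection property of Proposition~\ref{UQP} with $(X\cup p)\cap(X\cup q)=X$, which uses $p\neq q$ and $p,q\notin X$). The substantive direction also checks out: the minimal $Z$ with $X\cap Y\subseteq Z\subseteq X$ and $\varphi(Z)=\varphi(X)$ exists, minimality plus Corollary~\ref{Lemma} gives $p\notin\varphi(A)$ for $A=Z\setminus p$, the set $Y\setminus A$ is automatically nonempty (since $p\in\varphi(A\cup Y)$ but $p\notin\varphi(A)$), the minimal index $i$ produces a genuine anti-exchange configuration ($p,q\notin\varphi(A')$, $p\in\varphi(A'\cup q)$, $p\neq q$ because $p\notin Y$ while $q\in Y$), and the sandwich $Z\subseteq A'\cup p\subseteq Z\cup Y\subseteq\varphi(Z)$ together with \textbf{V2} forces $\varphi(A'\cup p)=\varphi(Y)\ni q$, contradicting the anti-exchange conclusion. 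The only cosmetic caveat is that the anti-exchange property as printed in the paper omits the hypothesis $p\neq q$ (without which it is vacuously false); you correctly supply distinctness where it is needed.
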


For each  arbitrary space $(E,\alpha)$ with the operator $\alpha:2^{E}\rightarrow2^{E}$, an element $x$ of a subset $X\subseteq E$ is \textit{an extreme point} of
$X$ if $x\notin\alpha(X-x)$. The set of extreme points of $X$ is denoted by
$ex(X)$.

\begin{proposition}\cite{KempnerLevit}
\label{expoint}
Let $(E,\varphi)$ be a violator space. Then $x\in ex(X)\ $if and only if
$\varphi(X) \neq \varphi(X-x)$.
\end{proposition}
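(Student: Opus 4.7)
The plan is to prove both directions by a contrapositive-style argument, invoking Corollary \ref{Lemma} (or equivalently Lemma \ref{lemma2} with $B=\{x\}$), which is the crucial tool: for a violator operator, $x\in\varphi(X-x)$ is equivalent to $\varphi(X-x)=\varphi(X)$.

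For the forward direction, assume $x\in ex(X)$, so by definition $x\in X$ and $x\notin\varphi(X-x)$. If we had $\varphi(X)=\varphi(X-x)$, then by extensivity \textbf{V1} we would get $x\in X\subseteq\varphi(X)=\varphi(X-x)$, contradicting $x\notin\varphi(X-x)$. Hence $\varphi(X)\neq\varphi(X-x)$. This step is essentially a one-line application of \textbf{V1}.

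For the reverse direction, suppose $\varphi(X)\neq\varphi(X-x)$. First note that if $x\notin X$ then $X-x=X$, forcing equality of the two values, a contradiction; thus $x\in X$. It remains to show $x\notin\varphi(X-x)$. Here is where Corollary \ref{Lemma} does the work: applied to the set $X-x$ and the element $x$, it states $x\in\varphi(X-x)\Leftrightarrow\varphi(X-x)=\varphi((X-x)\cup\{x\})=\varphi(X)$. So $x\in\varphi(X-x)$ would yield $\varphi(X-x)=\varphi(X)$, again a contradiction. Therefore $x\notin\varphi(X-x)$, i.e., $x\in ex(X)$.

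There is essentially no obstacle: the proposition is a direct reformulation of the extreme-point condition via the self-convexity-based equivalence established earlier. The only point requiring a little care is handling the case $x\notin X$ in the reverse direction, which is dispatched immediately from $X-x=X$. The substantive content of the result lies in Corollary \ref{Lemma}, which in turn relies on self-convexity \textbf{V2}; once that is in hand, the present proposition is a short bookkeeping argument.
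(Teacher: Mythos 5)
Your proof is correct, and it uses exactly the machinery the paper sets up for this purpose: the paper states Proposition \ref{expoint} as a cited result without reproducing a proof, and the intended argument is precisely the one you give, namely extensivity \textbf{V1} for the forward direction and Corollary \ref{Lemma} applied to $X-x$ and $x$ for the converse. The care you take with the degenerate case $x\notin X$ (which is needed because the biconditional as stated quantifies over all $x$, while $ex(X)\subseteq X$ by definition) is a worthwhile detail and does not change the substance.
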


\begin{proposition}
\cite{KempnerLevit} \label{exp} Let $(E,\varphi)$ be a violator space. Then
\begin{equation*}
ex(X)=\bigcap\{B\subseteq X:\varphi(B)=\varphi(X)\}.
\end{equation*}
\end{proposition}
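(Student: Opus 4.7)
The plan is to prove the equality by double inclusion, using as the only non-trivial ingredients Proposition \ref{expoint} (which equates $x\in ex(X)$ with $\varphi(X-x)\neq\varphi(X)$) and the convexity statement of Lemma \ref{un} (if $X\subseteq Y\subseteq Z$ and $\varphi(X)=\varphi(Z)$, then all three values coincide).

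For the inclusion $ex(X)\subseteq\bigcap\{B\subseteq X:\varphi(B)=\varphi(X)\}$, I would take an arbitrary $x\in ex(X)$ and an arbitrary $B\subseteq X$ with $\varphi(B)=\varphi(X)$, and argue by contradiction: if $x\notin B$, then $B\subseteq X-x\subseteq X$, so the convexity part of Lemma \ref{un} applied to the triple $B\subseteq X-x\subseteq X$ forces $\varphi(X-x)=\varphi(X)$, which contradicts $x\in ex(X)$ via Proposition \ref{expoint}.

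For the reverse inclusion, I would take $x\in\bigcap\{B\subseteq X:\varphi(B)=\varphi(X)\}$. First, choosing $B=X$ in the intersection shows $x\in X$. Next, suppose for contradiction that $x\notin ex(X)$; Proposition \ref{expoint} then gives $\varphi(X-x)=\varphi(X)$, so $B:=X-x$ is itself a member of the family $\{B\subseteq X:\varphi(B)=\varphi(X)\}$ that does not contain $x$, contradicting the assumption that $x$ lies in every such $B$. Hence $x\in ex(X)$.

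I do not expect a real obstacle here: both directions reduce to a single one-line application of Proposition \ref{expoint}, with the forward direction additionally invoking convexity to propagate the cospanning equality from $B$ up to $X-x$. The entire content of the proposition is essentially a repackaging of Proposition \ref{expoint}: membership in every generator contained in $X$ is equivalent to being indispensable for reaching $\varphi(X)$, and indispensability is precisely extremality.
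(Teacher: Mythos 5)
Your argument is correct: both inclusions go through exactly as you describe, the family $\{B\subseteq X:\varphi(B)=\varphi(X)\}$ is nonempty (it contains $X$), and the only tools needed are Proposition \ref{expoint} and the convexity part of Lemma \ref{un}, both available at that point in the text. Note that the paper itself states this proposition with a citation to \cite{KempnerLevit} and gives no proof, so there is nothing to compare against; your derivation is the natural one and fills that gap correctly.
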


\begin{proposition}\cite{KempnerLevit}
\label{exp-vs}
Let $(E,\varphi)$ be a violator space. Then $ex(\varphi(X))\subseteq ex(X)$.
\end{proposition}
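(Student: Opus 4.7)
The plan is to unpack both occurrences of $ex$ via Proposition \ref{expoint} and then exploit self-convexity twice by sandwiching suitable sets between $X-x$ (or $X$) and $\varphi(X)$.

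Fix $x\in ex(\varphi(X))$. Since $ex(\cdot)$ is always a subset of its argument, I know $x\in\varphi(X)$, and Proposition \ref{expoint} together with idempotence of $\varphi$ gives
\[
\varphi(X)=\varphi(\varphi(X))\neq\varphi(\varphi(X)-x).
\]
To conclude $x\in ex(X)$ I must check two things: first that $x\in X$, and then that $\varphi(X)\neq\varphi(X-x)$.

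For the first, I would argue by contradiction: if $x\notin X$, then $X\subseteq\varphi(X)-x$, and since also $\varphi(X)-x\subseteq\varphi(X)$, self-convexity \textbf{V2} forces $\varphi(X)=\varphi(\varphi(X)-x)$, contradicting the display above. So $x\in X$.

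For the second, again suppose for contradiction that $\varphi(X)=\varphi(X-x)$. Then $x\in\varphi(X-x)$, and I have the chain
\[
X-x\;\subseteq\;\varphi(X)-x\;\subseteq\;\varphi(X)=\varphi(X-x).
\]
Applying \textbf{V2} to the pair $X-x\subseteq\varphi(X)-x$ (both sandwiched inside $\varphi(X-x)$) yields $\varphi(X-x)=\varphi(\varphi(X)-x)$, hence $\varphi(X)=\varphi(\varphi(X)-x)$, again contradicting the display above. So $\varphi(X)\neq\varphi(X-x)$, and Proposition \ref{expoint} gives $x\in ex(X)$.

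The only mildly delicate step is the second sandwich: one must notice that removing $x$ from the top of the chain $X\subseteq\varphi(X)$ still leaves a valid inclusion $X-x\subseteq\varphi(X)-x$ inside $\varphi(X-x)$, so that self-convexity can be invoked with $\varphi(X)-x$ as the middle set. Everything else is direct use of extensivity, idempotence, and the equivalence from Proposition \ref{expoint}.
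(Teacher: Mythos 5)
Your argument is correct: both sandwich applications of \textbf{V2} are legitimate, since under the respective hypotheses one indeed has $X\subseteq\varphi(X)-x\subseteq\varphi(X)$ (when $x\notin X$) and $X-x\subseteq\varphi(X)-x\subseteq\varphi(X-x)$ (when $\varphi(X)=\varphi(X-x)$), and idempotence of $\varphi$, which follows from \textbf{V1} and \textbf{V2}, justifies the opening display. Note that the paper does not reproduce a proof here (the proposition is cited from the reference on violator spaces vs.\ closure spaces), so there is no in-text argument to match; still, it is worth pointing out that a shorter route is available from results already stated in the paper. By Proposition \ref{exp}, $ex(Y)=\bigcap\{B\subseteq Y:\varphi(B)=\varphi(Y)\}$; applying this to $Y=\varphi(X)$ and using $\varphi(\varphi(X))=\varphi(X)$, the family intersected to form $ex(\varphi(X))$ contains every $B\subseteq X$ with $\varphi(B)=\varphi(X)$ (because $X\subseteq\varphi(X)$), so $ex(\varphi(X))$ is an intersection over a superfamily of the one defining $ex(X)$ and is therefore contained in $ex(X)$. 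Your elementwise argument buys independence from Proposition \ref{exp} at the cost of the two case analyses; the intersection argument is essentially one line but leans on that auxiliary characterization. A final minor remark: your Step 1 is actually subsumed by Step 2, since if $x\notin X$ then $X-x=X$ and $\varphi(X)=\varphi(X-x)$ holds automatically, which is exactly the hypothesis you refute in Step 2; the redundancy is harmless.
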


\subsection{Co-violator spaces}

\begin{definition}\cite{Co-V}
A \textit{co-violator space} is a pair $(E,c)$, where $E$ is a finite set
and $c$ is an operator $2^{E}\rightarrow2^{E}$ such that for all subsets
$X,Y\subseteq E$ the following properties are satisfied:

\textbf{CV1}: $c(X)\subseteq X$,

\textbf{CV2}: $(c(X)\subseteq Y\subseteq X)\Rightarrow c(X)=c(Y)$.
\end{definition}

Operators which satisfy the property \textbf{CV1} are called contracting operators.

In social sciences, contracting operators are called choice functions, usually adding a requirement that $c(X)\neq\emptyset$ for every $X\neq\emptyset$.  The property \textbf{CV2} is called the \textit{outcast property} or the \textit{Aizerman property} (\cite{Monjardet}).

The properties of co-violator spaces match to the corresponding
("mirrored") properties of violator spaces. Thus, every co-violator operator $c$ is idempotent. Indeed, since $c$ is contracting  $c(X) \subseteq c(X)\subseteq X$. Then, \textbf{CV2}
implies $c(c(X))=c(X)$.

Lemma \ref{un} is converted to the following.
\begin{lemma}\cite{Co-V}
\label{co-un} Let $(E,c)$ be a co-violator space. Then
\begin{equation*}
c(X)=c(Y)\Rightarrow c(X\cap Y)=c(X)=c(Y)
\label{Intersection}%
\end{equation*}
and
\begin{equation*}
(X \subseteq Y \subseteq Z) \wedge (c(X)=c(Z)) \Rightarrow c(X)=c(Y)=c(Z)
\label{Co-Convexity}%
\end{equation*}
for every $X,Y,Z\subseteq E$.
\end{lemma}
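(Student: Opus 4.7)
The plan is to prove each of the two implications by directly applying the outcast property \textbf{CV2}, using contraction \textbf{CV1} to produce the required sandwich of sets. The first implication mirrors Lemma \ref{un} by exchanging union with intersection, and the second mirrors the "convexity" property of Lemma \ref{un} verbatim; in both cases no new ideas beyond \textbf{CV1} and \textbf{CV2} should be needed.

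For the first implication, I would start from the hypothesis $c(X)=c(Y)$ and use \textbf{CV1} twice to obtain $c(X)\subseteq X$ and $c(Y)=c(X)\subseteq Y$, so that $c(X)\subseteq X\cap Y$. Combining this with $X\cap Y\subseteq X$, I get the sandwich $c(X)\subseteq X\cap Y\subseteq X$, and \textbf{CV2} applied to $X$ immediately yields $c(X\cap Y)=c(X)$, which together with $c(X)=c(Y)$ gives the full conclusion $c(X\cap Y)=c(X)=c(Y)$.

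For the second implication, assume $X\subseteq Y\subseteq Z$ and $c(X)=c(Z)$. Using \textbf{CV1} on $X$ I have $c(X)\subseteq X$, so $c(Z)=c(X)\subseteq X\subseteq Y$; combined with $Y\subseteq Z$ this gives the sandwich $c(Z)\subseteq Y\subseteq Z$, and \textbf{CV2} applied to $Z$ yields $c(Y)=c(Z)$. Since $c(X)=c(Z)$ by hypothesis, the chain $c(X)=c(Y)=c(Z)$ follows.

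I do not expect any real obstacle here, since both parts are essentially two-line arguments that invoke only the defining axioms. The only thing worth flagging is to be careful to invoke \textbf{CV1} for the \emph{right} set in each case (using $c(X)\subseteq X$ and $c(Y)\subseteq Y$ rather than the trivial inclusions) so that the sandwich needed for \textbf{CV2} is genuinely available; beyond that, the result is the exact formal dual of Lemma \ref{un} and falls out directly.
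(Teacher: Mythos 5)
Your proof is correct: both sandwiches $c(X)\subseteq X\cap Y\subseteq X$ and $c(Z)\subseteq Y\subseteq Z$ are legitimately obtained from \textbf{CV1} together with the hypothesis, and \textbf{CV2} then gives exactly the claimed equalities. The paper itself only cites this lemma from \cite{Co-V} as the "mirrored" dual of Lemma \ref{un} without reproducing a proof, and your argument is precisely that dual argument, so there is nothing to add.
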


  Given an extensive operator $\varphi:2^{E}\rightarrow2^{E}$, one can get a contracting operator $c$: $c(X)=E-\varphi(E-X)$ or $\overline{c(X)}=\varphi(\overline{X})$. In topology this construction is known as \textit{interior operator} dual to a closure operator.

\begin{proposition}\cite{Co-V}
\label{co-operator}
$(E,\varphi)$ is a violator space if and only if $(E,c)$ is a co-violator space, where $c(X)=\overline{\varphi(\overline{X})}$.
\end{proposition}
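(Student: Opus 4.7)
The plan is to prove the equivalence directly by showing that, under the bijective substitution $X \mapsto E - X$, the axioms \textbf{CV1} and \textbf{CV2} of the co-violator space translate into the axioms \textbf{V1} and \textbf{V2} of the violator space, and vice versa. Note that the relation $c(X)=E-\varphi(E-X)$ is equivalent, after substituting $E-X$ for $X$ and complementing both sides, to $\varphi(X)=E-c(E-X)$, so either operator determines the other. Since complementation is an involution on $2^{E}$, it suffices to check that each pair of axioms is a literal De Morgan dual of the other.

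First I would handle the contracting/extensivity pair. Unfolding \textbf{CV1} yields $E-\varphi(E-X)\subseteq X$; taking complements and writing $A=E-X$, this becomes $A\subseteq\varphi(A)$, which is exactly \textbf{V1}. Since $A$ ranges over all of $2^{E}$ as $X$ does, the same computation run in reverse derives \textbf{CV1} from \textbf{V1}.

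Next I would handle the outcast/self-convexity pair. Assuming \textbf{V2}, suppose $c(X)\subseteq Y\subseteq X$. Setting $A=E-X$ and $B=E-Y$, complementation reverses the inclusions and turns $c(X)$ into $\varphi(A)$, giving $A\subseteq B\subseteq\varphi(A)$. Then \textbf{V2} yields $\varphi(A)=\varphi(B)$, and complementing once more returns $c(X)=c(Y)$, which is \textbf{CV2}. The converse \textbf{CV2}$\Rightarrow$\textbf{V2} is the mirror image argument: start with $X\subseteq Y\subseteq\varphi(X)$, complement to obtain $c(E-X)\subseteq E-Y\subseteq E-X$, apply \textbf{CV2}, and complement back.

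I do not anticipate any real obstacle: the entire proof amounts to observing that $(\textbf{V1},\textbf{V2})$ and $(\textbf{CV1},\textbf{CV2})$ are exact De Morgan duals. The only care required is bookkeeping, namely ensuring that the direction of inclusions reverses correctly under complementation and that the universal quantifier over subsets is preserved when one substitutes $A=E-X$ and $B=E-Y$.
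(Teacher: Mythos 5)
Your proof is correct: the De Morgan duality computation translating \textbf{CV1} and \textbf{CV2} into \textbf{V1} and \textbf{V2} under the involution $X\mapsto E-X$ (with the inclusion reversal and the identity $\varphi(X)=E-c(E-X)$ handled exactly as you describe) is precisely the intended argument, which the paper itself only cites from \cite{Co-V} while remarking that the co-violator axioms are the ``mirrored'' violator axioms. No gaps.
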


\subsection{Cospanning relations of violator spaces}

Let $E=\left\{ {x_{1},x_{2},...,x_{d}}\right\} $. The graph $H(E)$ is
defined as follows. The vertices are the finite subsets of $E$, two vertices
$A$ and $B$ are adjacent if and only if they differ in exactly one element.
Actually, $H(E)$ is \textit{the hypercube} on $E$ of dimension $d$, since
the hypercube is known to be equivalently considered as the graph on the
Boolean space $\{0,1\}^{d}$ in which two vertices form an edge if and only
if they differ in exactly one position.

Let $(E,\varphi)$ be a violator space. The two sets $X$ and $Y$ are \textit{%
equivalent} (or \textit{cospanning}) if $\varphi(X)=\varphi(Y)$. In what
follows, $\mathcal{P}$ denotes a partition of $H(E)$ ( or $2^{E}$) into
equivalence classes with regard to this relation, and $[A]_{\varphi}:=\{X%
\subseteq E:\varphi(X)=\varphi(A)\}$.

The following theorem characterizes the cospanning relation of violator spaces.
\begin{theorem}\cite{Co-V}
\label{T_rel}
Let $E$ be a finite set and $R \subseteq 2^{E} \times 2^{E}$ be an equivalence relation on $2^{E}$. Then $R$ is the cospanning relation of a violator space if and only if the following properties hold for every $X,Y,Z \subseteq E$:

\textbf{R1}: if $(X,Y) \in R$, then $(X,X \cup Y) \in R$

\textbf{R2}: if $X \subseteq Y \subseteq Z$ and $(X,Z) \in R$, then $(X,Y) \in R$.

\end {theorem}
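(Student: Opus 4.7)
The plan is to prove the two directions separately. The forward direction (necessity) is essentially a repackaging of Lemma \ref{un}: if $R$ is the cospanning relation of a violator space $(E,\varphi)$, then R1 is immediate from the first (union) part of Lemma \ref{un}, since $\varphi(X)=\varphi(Y)$ forces $\varphi(X\cup Y)=\varphi(X)$, and R2 is the convexity statement in Lemma \ref{un}. So nothing really happens here beyond citing that lemma.

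For the converse, given an equivalence relation $R$ on $2^{E}$ satisfying R1 and R2, I would define the candidate operator
\[
\varphi(X) := \bigcup \{Y \subseteq E : (X,Y) \in R\},
\]
which is well-defined because $E$ is finite. Extensivity (\textbf{V1}) is immediate: reflexivity of $R$ gives $(X,X) \in R$, so $X$ is one of the sets in the union. The work is to verify (i) $\varphi$ is self-convex and (ii) the cospanning relation induced by $\varphi$ coincides with $R$. Both hinge on the same key claim:
\begin{equation*}
(X,\varphi(X)) \in R \quad\text{for every } X \subseteq E,
\end{equation*}
i.e.\ the union of an entire equivalence class of $R$ is again a member of that class.

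I expect this claim to be the main obstacle and would prove it by induction on the size of the class. If $(X,Y_{1}),\ldots,(X,Y_{k}) \in R$, set $S_{j}=Y_{1}\cup\cdots\cup Y_{j}$; assuming $(X,S_{k-1}) \in R$, transitivity yields $(S_{k-1},Y_{k}) \in R$, R1 upgrades this to $(S_{k-1},S_{k-1}\cup Y_{k})=(S_{k-1},S_{k}) \in R$, and transitivity with $(X,S_{k-1})$ closes the step. Since the class is finite, iterating gives $(X,\varphi(X)) \in R$.

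With the key claim in hand, the remaining verifications are short. For self-convexity, suppose $X \subseteq Y \subseteq \varphi(X)$. Since $(X,\varphi(X)) \in R$, property R2 gives $(X,Y) \in R$, which by the key claim and transitivity forces $\varphi(X)=\varphi(Y)$. For the cospanning relation: if $(X,Y) \in R$, then the classes of $X$ and $Y$ under $R$ are identical, so $\varphi(X)=\varphi(Y)$; conversely, if $\varphi(X)=\varphi(Y)$, then $(X,\varphi(X))=(X,\varphi(Y)) \in R$ and $(Y,\varphi(Y)) \in R$ combine by symmetry and transitivity to yield $(X,Y) \in R$. This shows $R$ is exactly the cospanning relation of the violator space $(E,\varphi)$, completing the argument.
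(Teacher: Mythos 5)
Your proof is correct; the paper itself only cites this theorem from \cite{Co-V} without reproducing a proof, and your argument is the standard one: the forward direction is exactly Lemma \ref{un}, and for the converse you define $\varphi(X)$ as the union of the $R$-class of $X$, with the key point being that \textbf{R1} plus transitivity (iterated over the finite class) puts that union back into the class, after which extensivity, self-convexity via \textbf{R2}, and the identification of $R$ with the cospanning relation of $\varphi$ all follow as you describe. No gaps.
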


Thus each equivalence class of the cospanning relation of a violator space is closed under union (\textbf{R1}) and convex (\textbf{R2}).

Similarly,
\begin{theorem}\cite{Co-V}
\label{cv_rel}
Let $E$ be a finite set and $R \subseteq 2^{E} \times 2^{E}$ be an equivalence relation on $2^{E}$. Then $R$ is the cospanning relation of a co-violator space if and only if the following properties hold for every $X,Y,Z \subseteq E$:

\textbf{R3}: if $(X,Y) \in R$, then $(X,X \cap Y) \in R$

\textbf{R2}: if $X \subseteq Y \subseteq Z$ and $(X,Z) \in R$, then $(X,Y) \in R$.

\end {theorem}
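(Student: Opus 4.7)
My plan is to derive Theorem \ref{cv_rel} by combining the already-established Theorem \ref{T_rel} with the duality between violator and co-violator spaces recorded in Proposition \ref{co-operator}. The forward direction is immediate from Lemma \ref{co-un}: if $R$ is the cospanning relation of a co-violator space $(E,c)$, then the two displayed properties in that lemma say exactly that $R$ is closed under intersection with any cospanning partner (R3) and is convex in the hypercube (R2).

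For the reverse direction, suppose $R$ satisfies R3 and R2. Define an auxiliary relation $R'$ on $2^E$ by $(A,B)\in R' \iff (\overline{A},\overline{B})\in R$. Because $\overline{X\cap Y} = \overline{X}\cup\overline{Y}$, property R3 for $R$ translates directly into property R1 for $R'$. For property R2 of $R'$, if $A\subseteq B\subseteq C$ and $(A,C)\in R'$, then $\overline{C}\subseteq\overline{B}\subseteq\overline{A}$ and $(\overline{A},\overline{C})\in R$; symmetry of the equivalence relation lets us read this as $(\overline{C},\overline{A})\in R$, R2 for $R$ then gives $(\overline{C},\overline{B})\in R$, and one further use of symmetry and transitivity yields $(A,B)\in R'$. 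Hence $R'$ meets the hypotheses of Theorem \ref{T_rel}, so there is a violator operator $\varphi$ whose cospanning relation is $R'$. Setting $c(X)=\overline{\varphi(\overline{X})}$, Proposition \ref{co-operator} makes $(E,c)$ a co-violator space, and $c(X)=c(Y)$ is equivalent to $\varphi(\overline{X})=\varphi(\overline{Y})$, i.e.\ $(\overline{X},\overline{Y})\in R'$, i.e.\ $(X,Y)\in R$. Thus $R$ is the cospanning relation of $c$.

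If one prefers a direct construction that mirrors the likely proof of Theorem \ref{T_rel}, the natural candidate is $c(X):=\bigcap\{Y\subseteq X:(X,Y)\in R\}$, the minimum element of $[X]_R$ sitting below $X$. The inclusion $c(X)\subseteq X$ is immediate (CV1). Iterated use of R3, together with symmetry and transitivity of $R$, shows that the family of subsets of $X$ cospanning with $X$ is closed under intersection, so $c(X)$ is itself cospanning with $X$. Property CV2 follows from R2 applied on the interval $[c(X),X]$, and the identity $c(X)=c(Y)\iff (X,Y)\in R$ is proved by considering $c(X)\cap c(Y)$ (which lies below both $c(X)$ and $c(Y)$ and is cospanning to each by R3) and invoking the minimality built into the definition.

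The step I expect to require the most care is the R2-to-R2 bookkeeping under complementation in the duality argument, since taking complements reverses inclusions and one must explicitly use that $R$ is an equivalence relation (hence symmetric) to restore the orientation demanded by the statement of R2. The analogous subtlety in the direct-construction route is verifying that $c(X)$ as defined is actually cospanning with $X$, which is where R3 combined with the finiteness of $E$ (so that the intersection is taken over finitely many sets) does the real work.
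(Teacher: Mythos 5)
Your argument is correct. The paper itself does not prove Theorem \ref{cv_rel} (it is quoted from \cite{Co-V}), but your reduction is exactly the route the surrounding text points to: the forward direction is Lemma \ref{co-un}, and the reverse direction transports $R$ through complementation to a relation satisfying \textbf{R1} and \textbf{R2}, invokes Theorem \ref{T_rel}, and returns via $c(X)=\overline{\varphi(\overline{X})}$ using Proposition \ref{co-operator} (your correspondence $(X,Y)\in R\Leftrightarrow(\overline{X},\overline{Y})\in R'$ is precisely Proposition \ref{co-co}). Both your bookkeeping of the inclusion reversal under complementation and your alternative direct construction $c(X)=\bigcap\{Y\subseteq X:(X,Y)\in R\}$ check out, so no gaps.
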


Consider now a violator operator $\varphi$ and a co-violator operator $c(X)=\overline{\varphi(\overline{X})}$.

\begin{proposition}\cite{Co-V}
\label{co-co}
There is a one-to-one correspondence between an equivalence class $[X]_{\varphi}$ of $X$ of the cospanning relation associated with a violator operator $\varphi$ and an equivalence class $[\overline{X}]_{c}$ w.r.t. a co-violator operator $c$, i.e., $A \in [X]_{\varphi}$ if and only if $\overline{A} \in [\overline{X}]_{c}$.
 \end{proposition}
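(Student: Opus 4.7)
The plan is to reduce the claim to a direct computation using the defining relation $c(Y)=\overline{\varphi(\overline{Y})}$, and then observe that complementation is an involution on $2^{E}$, which automatically produces the required bijection between classes.

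First I would substitute $Y=\overline{A}$ into the definition of $c$ to obtain $c(\overline{A})=\overline{\varphi(\overline{\overline{A}})}=\overline{\varphi(A)}$, and similarly $c(\overline{X})=\overline{\varphi(X)}$. From this the chain of equivalences $\varphi(A)=\varphi(X) \Leftrightarrow \overline{\varphi(A)}=\overline{\varphi(X)} \Leftrightarrow c(\overline{A})=c(\overline{X})$ is immediate, since complementation is a bijection of $2^{E}$ onto itself. This is exactly the statement $A\in[X]_{\varphi} \Leftrightarrow \overline{A}\in[\overline{X}]_{c}$.

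Next I would spell out the one-to-one correspondence. Define $\Phi\colon[X]_{\varphi}\to[\overline{X}]_{c}$ by $\Phi(A)=\overline{A}$. The previous equivalence guarantees that $\Phi$ is well defined and that its range lies in $[\overline{X}]_{c}$; the same equivalence, read in the other direction, shows that $\Psi(B)=\overline{B}$ sends $[\overline{X}]_{c}$ into $[X]_{\varphi}$. Since $\Phi\circ\Psi$ and $\Psi\circ\Phi$ are identities (complementation is an involution), $\Phi$ is a bijection.

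I do not foresee any genuine obstacle: by Proposition \ref{co-operator} the pairing between $\varphi$ and $c$ is already known to produce matching axiomatic systems, so the only content here is to translate cospanning through complementation. The only thing worth stating carefully is that $A=B$ iff $\overline{A}=\overline{B}$, which underlies why the map $A\mapsto\overline{A}$ is injective on each equivalence class as well as surjective onto the complementary class.
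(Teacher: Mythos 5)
Your proof is correct: the computation $c(\overline{A})=\overline{\varphi(A)}$ immediately yields $\varphi(A)=\varphi(X)\Leftrightarrow c(\overline{A})=c(\overline{X})$, and the involutivity of complementation gives the bijection between classes. The paper itself only cites \cite{Co-V} for this proposition without reproducing a proof, and your argument is the natural one-line verification that any proof of this statement would reduce to.
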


 An uniquely generated violator space defines a cospanning relation with additional property \textbf{R3} (see Proposition \ref{UQP}).

So every uniquely generated violator space is a co-violator space as well. Each equivalence class of the cospanning relation of a uniquely generated violator space has an unique minimal element and an unique maximal element. More precisely,
for the sets $A\subseteq B\subseteq E$, let $[A,B]:=\{C\subseteq E:A\subseteq
C\subseteq B\}$. One calls any $[A,B]$ an \textit{interval}. Then each equivalence class of an uniquely generated violator space is an interval. We call a partition of $H(E)$ into disjoint intervals a
\textit{hypercube partition}. The following Theorem follows immediately from Theorem \ref{T_rel} and Proposition \ref{UQP}.

\begin{theorem} (\cite{Clarkson})
(i) If $(E,\varphi)$ is a uniquely generated violator space, then $\mathcal{P}$ is a hypercube partition of $H(E)$.

(ii) Every hypercube partition is the partition $\mathcal{P}$ of $H(E)$ into
equivalence classes of a uniquely generated violator space.
\end{theorem}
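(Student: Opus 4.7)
The plan is to derive both directions directly from the two cited results: Theorem \ref{T_rel}, which characterizes cospanning relations of violator spaces by \textbf{R1} and \textbf{R2}, and Proposition \ref{UQP}, which tells us that unique generation is equivalent to the intersection property $\varphi(X)=\varphi(Y)\Rightarrow\varphi(X\cap Y)=\varphi(X)$.

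For part (i), let $(E,\varphi)$ be a uniquely generated violator space and let $C=[X]_{\varphi}$ be an arbitrary equivalence class. Theorem \ref{T_rel} gives that $C$ is closed under pairwise unions (from \textbf{R1}) and convex (from \textbf{R2}). Proposition \ref{UQP} gives additionally that $C$ is closed under pairwise intersections. Since $E$ is finite, iterating these two closure properties over all members of $C$ yields that $B:=\bigcup_{Y\in C}Y$ and $A:=\bigcap_{Y\in C}Y$ both belong to $C$, and by construction $A\subseteq Y\subseteq B$ for every $Y\in C$. Conversely, for any $Z$ with $A\subseteq Z\subseteq B$, convexity (\textbf{R2}) applied to the triple $A\subseteq Z\subseteq B$ forces $Z\in C$. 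Hence $C=[A,B]$ is an interval, and $\mathcal{P}$ is a hypercube partition.

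For part (ii), let $\mathcal{P}=\{[A_i,B_i]\}_i$ be a hypercube partition of $H(E)$, and define $\varphi(X):=B_i$ whenever $X\in[A_i,B_i]$. I would first check that the equivalence relation $R$ associated with $\mathcal{P}$ satisfies \textbf{R1} and \textbf{R2}: if $X,Y\in[A_i,B_i]$ then $A_i\subseteq X\subseteq X\cup Y\subseteq B_i$, giving \textbf{R1}, and if $X\subseteq Y\subseteq Z$ with $X,Z\in[A_i,B_i]$ then $A_i\subseteq Y\subseteq B_i$, giving \textbf{R2}. By Theorem \ref{T_rel}, $R$ is the cospanning relation of a violator space; one verifies directly that the $\varphi$ defined above works (extensivity is immediate from $X\subseteq B_i$, and self-convexity follows because $X\subseteq Y\subseteq\varphi(X)=B_i$ with $X\in[A_i,B_i]$ forces $Y\in[A_i,B_i]$). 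Finally, the intersection property $X\cap Y\supseteq A_i$ when $X,Y\in[A_i,B_i]$ gives \textbf{R3} for the same relation, so by Proposition \ref{UQP} the violator space $(E,\varphi)$ is uniquely generated, with the unique basis of any member of $[A_i,B_i]$ being $A_i$.

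There is essentially no obstacle: the content is a bookkeeping combination of the already-established equivalences between structural properties of $\varphi$ and of its cospanning relation. The only place a novice reader might stumble is in recognizing that Proposition \ref{UQP}, although phrased as an intersection property of $\varphi$, is exactly the analogue of \textbf{R1} for intersections and hence plays the role of \textbf{R3} for the cospanning relation. Once that correspondence is stated, both inclusions fall out in a few lines.
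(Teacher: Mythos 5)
Your proposal is correct and follows exactly the route the paper intends: the paper gives no written proof, stating only that the theorem ``follows immediately from Theorem \ref{T_rel} and Proposition \ref{UQP},'' and your argument is a faithful, detailed unpacking of that one-line claim (interval structure of classes from \textbf{R1}, \textbf{R2}, and the intersection property for part (i); the operator $\varphi(X)=B_i$ with direct verification of extensivity, self-convexity, and unique generation for part (ii)). No gaps.
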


More specifically \cite{KempnerLevit}, $[A]_{\varphi}=[ex(A),\varphi(A)]$ for every set $A \subseteq E$.

It is interesting notice that for a cospanning relation $R$ of a violator space, i.e., for an equivalence relation satisfying \textbf{R1} and \textbf{R2}, the property \textbf{R3} is equivalent to the anti-exchange property. First, rewrite the anti-exchange property in terms of a cospanning relation $R$. From Corollary \ref{Lemma} it follows

 $p \notin \varphi(X) \Leftrightarrow \varphi(X)\neq \varphi(X\cup p) \Leftrightarrow (X,X\cup p) \notin R$.

Then the anti-exchange property looks as

\textbf{R33}: if $(X,X \cup p) \notin R$ , $(X,X \cup q) \notin R$, and $(X \cup p,X \cup p \cup q) \in R$, then $(X \cup q,X \cup p \cup q) \notin R$.

\begin{proposition}
An equivalence relation $R$ satisfying the property \textbf{R1},\textbf{R2} and \textbf{R3} coincides with an
equivalence relation satisfying the property \textbf{R1},\textbf{R2} and \textbf{R33}.
\end{proposition}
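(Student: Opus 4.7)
My plan is to route the equivalence through the operator-level statements that have already been proved. By Theorem \ref{T_rel}, the common hypotheses \textbf{R1} and \textbf{R2} guarantee that $R$ arises as the cospanning relation of some violator space $(E,\varphi)$, where $\varphi(X)$ can be read off as the largest set in the equivalence class $[X]_{R}$ (well-defined by \textbf{R1}). The task then reduces to translating each of \textbf{R3} and \textbf{R33} into a property of $\varphi$ and invoking the earlier characterizations.

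First I would observe that \textbf{R3} is simply condition (\ref{UQ}) transcribed in $R$-language, since $(X,Y)\in R$ means $\varphi(X)=\varphi(Y)$ and $(X,X\cap Y)\in R$ means $\varphi(X)=\varphi(X\cap Y)$. Hence by Proposition \ref{UQP} the relation $R$ satisfies \textbf{R3} if and only if $(E,\varphi)$ is uniquely generated, and by Theorem \ref{TH} this is equivalent to $\varphi$ having the anti-exchange property.

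Next I would verify that \textbf{R33} is precisely the anti-exchange property rewritten in $R$-language. The translation is the one already sketched in the paragraph preceding the statement and uses Corollary \ref{Lemma} twice: the equivalence $p\notin\varphi(X)\Leftrightarrow(X,X\cup p)\notin R$ converts the two non-membership hypotheses, while $q\in\varphi(X\cup p)\Leftrightarrow(X\cup p,X\cup p\cup q)\in R$ converts the remaining hypothesis, and the same equivalence applied to the conclusion $p\notin\varphi(X\cup q)$ produces $(X\cup q,X\cup p\cup q)\notin R$. Substituting these four equivalences into the anti-exchange implication yields \textbf{R33} verbatim (up to renaming $p\leftrightarrow q$).

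Chaining both translations gives \textbf{R3} $\Leftrightarrow$ anti-exchange $\Leftrightarrow$ \textbf{R33} under the standing assumption of \textbf{R1} and \textbf{R2}, which is the claimed coincidence. The only step requiring care is the bookkeeping in the second paragraph: each membership and non-membership clause must be converted consistently via Corollary \ref{Lemma}, but no genuine obstacle arises because all the substantive work is already packaged in Theorem \ref{TH} and Proposition \ref{UQP}.
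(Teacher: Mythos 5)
Your proof is correct, but it takes a genuinely different route from the paper's. You reduce everything to the operator level: Theorem \ref{T_rel} realizes $R$ as the cospanning relation of a violator space $(E,\varphi)$, Proposition \ref{UQP} identifies \textbf{R3} with unique generation, Theorem \ref{TH} identifies unique generation with the anti-exchange property, and Corollary \ref{Lemma} shows \textbf{R33} is the anti-exchange property transcribed in $R$-language (the $p\leftrightarrow q$ renaming is harmless since both statements are universally quantified over $p$ and $q$, and extensivity makes the implicit hypothesis $p,q\notin X$ automatic). The paper instead argues entirely at the level of the relation: the direction \textbf{R3}$\Rightarrow$\textbf{R33} is a short transitivity-plus-intersection contradiction, and the direction \textbf{R33}$\Rightarrow$\textbf{R3} is a hands-on argument that every equivalence class has a unique minimal element, built by taking a putative second minimal element $B$, a witness $p\in B-Y$, and a minimal augmenting set $C$ whose structure produces a violation of \textbf{R33}. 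Your version is shorter and cleaner, but it outsources the substantive combinatorics to Theorem \ref{TH}, which is proved in the cited reference rather than in this paper; the paper's version is self-contained given only \textbf{R1} and \textbf{R2}, and its chain-and-minimal-set technique is reused later (e.g., in the greedoid and antimatroid sections), which is presumably why the authors chose the direct argument. Both are complete proofs of the proposition.
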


\begin{proof}
Let $R$ be an equivalence relation satisfying the property \textbf{R1} and \textbf{R2}.

1. Prove, that if $R$ satisfies \textbf{R3}, then \textbf{R33} holds. Suppose there are $X \in E, p,q \notin X$ for which \textbf{R33} does not hold, i.e., $(X \cup q,X \cup p \cup q) \in R$. Since $R$ is an
equivalence relation we have $(X \cup q,X \cup p) \in R$, but from \textbf{R3} it follows that $(X,X \cup p) \in R$. Contradiction.

2. Prove, that if $R$ satisfies \textbf{R33}, then \textbf{R3} holds. \textbf{R1} implies uniqueness of maximal element, so consider some equivalence class  with unique maximal element $X$. Let $B$ be a minimal element of $[X]_{R}$. To prove that $B$ is unique
minimal element enough to prove  that $B \subseteq Y$ for each $Y \in [X]_{R}$.
Suppose for the sake of contradiction that there are $p\in B$ and $p\notin
Y$. Since $B-p \subseteq B\subseteq X$ and
$Y\subseteq X$, we have $Y\subseteq(B-p)\cup
Y\subseteq X$. Now, \textbf{R2} implies that $((B-p)\cup
Y,X) \in R$. Since $B$ is a minimal generator of $X$, we obtain that
$((B-p),X)\notin R$. Thus one can see that there is a minimal set
$C$ such that $\emptyset \subset C\subseteq Y$ and $((B-p)\cup
C),X)\in R$.

Consider some element $q\in C$, and let $Z=(B-p)\cup(C-q)$. Based on
minimality of $C$ it follows that $(Z,X)\notin R$. Note that
$(Z\cup p,X)\in R$, which follows from $B \subseteq Z\cup
p\subseteq X$ and \textbf{R2}. Thus, $(Z,Z\cup p)\notin R$.
Similarly, we
obtain $(Z,Z\cup q)\notin R$, since $(Z\cup q)=(B-p)\cup
C$ and so $(Z\cup q,X)\in R$. Now $(Z\cup
p \cup q,X)\in R$, that means $(Z \cup p,Z \cup p \cup q) \in R$, and $(Z \cup q,Z \cup p \cup q) \in R$, contradicting the \textbf{R33}. Consequently, $B \subseteq Y$ for each $Y \in [X]_{R}$. Hence  \textbf{R3} holds.
\end{proof}

Consider now an uniquely generated violator space $(E,\varphi)$ and operator $ex$. Since each equivalence class $[A]_{\varphi}$ w.r.t. operator $\varphi$  is an interval $[ex(A),\varphi(A)]$, we can see that  for each $X \in [ex(A),\varphi(A)]$ not only $\varphi(X)=\varphi(A)$, but $ex(X)=ex(A)$ as well, and since $\mathcal{P}$ is a hypercube partition of $H(E)$ we conclude  with  $[X]_{\varphi}=[X]_{ex}$. So the cospanning partition (quotient set) associated with an operator $\varphi$ coincides with the cospanning partition associated with a contracting operator $ex$.
Since $ex(X)$ is a minimal element of $[X]$ we immediately obtain
\begin{proposition}\cite{Co-V}
If $(E,\varphi)$ is a uniquely generated violator space, then
 operator $ex$  satisfies the following properties:

 \textbf{X1}: $ex(ex(X))=ex(X)$

\textbf{X2}: $ex(X)=ex(Y)\Rightarrow ex(X\cup Y)=ex(X)=ex(Y)$

\textbf{X3}:$(X \subseteq Y \subseteq Z) \wedge (ex(X)=ex(Z)) \Rightarrow ex(X)=ex(Y)=ex(Z)$

\textbf{X4}: $ex(X)=ex(Y)\Rightarrow ex(X\cap Y)=ex(X)=ex(Y)$
\end{proposition}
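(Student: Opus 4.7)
The plan is to leverage the identification $[X]_{\varphi}=[X]_{ex}$ that was just established in the paragraph preceding the proposition. Once we know that the cospanning partition under $ex$ coincides with the cospanning partition under $\varphi$, every one of X1--X4 is a direct readout of a closure property of the equivalence classes of $\varphi$ that has already been proved.

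Concretely, I would proceed as follows. First I would record, as the single working input, that for every $A \subseteq E$ the class $[A]_{\varphi}$ is the interval $[ex(A),\varphi(A)]$, and that consequently $ex(Y)=ex(A)$ for every $Y$ in this interval; in particular $[X]_{ex}=[X]_{\varphi}$ for every $X \subseteq E$. Then I would treat the four properties in turn.

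For \textbf{X1}: since $ex(X)$ itself lies in the interval $[ex(X),\varphi(X)]=[X]_{\varphi}$, the identification $ex(Y)=ex(X)$ for all $Y$ in this interval, applied with $Y=ex(X)$, gives $ex(ex(X))=ex(X)$. For \textbf{X2}: the hypothesis $ex(X)=ex(Y)$ places $X$ and $Y$ in the same equivalence class of $\varphi$, which by property \textbf{R1} of Theorem \ref{T_rel} is closed under union, so $X \cup Y$ lies in the same class and therefore $ex(X\cup Y)=ex(X)=ex(Y)$. For \textbf{X3}: again $ex(X)=ex(Z)$ puts $X$ and $Z$ in the same class of $\varphi$, and by \textbf{R2} (convexity of classes) any $Y$ with $X \subseteq Y \subseteq Z$ belongs to this class as well, giving $ex(X)=ex(Y)=ex(Z)$. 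For \textbf{X4}: unique generation together with Proposition \ref{UQP} tells us that classes of $\varphi$ are also closed under intersection (property \textbf{R3}), so $ex(X)=ex(Y)$ forces $X \cap Y$ into the same class and $ex(X \cap Y)=ex(X)=ex(Y)$.

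There is no real obstacle here; the proof is essentially a packaging exercise. The only step that requires a moment of care is the very first one, namely justifying $[X]_{ex}=[X]_{\varphi}$, because this silently uses both directions of the interval description: that $ex$ is constant on $[X]_{\varphi}$ (so $[X]_{\varphi}\subseteq [X]_{ex}$) and that distinct classes of $\varphi$ have distinct minima, hence distinct values of $ex$ (so $[X]_{ex}\subseteq [X]_{\varphi}$). Both of these are immediate from uniqueness of the basis, but it is worth spelling them out once at the start so that the subsequent four bullets become purely mechanical.
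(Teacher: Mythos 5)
Your proposal is correct and follows essentially the same route as the paper: the paper derives the proposition from the preceding observation that $[X]_{ex}=[X]_{\varphi}$ together with the fact that the classes of $\varphi$ are intervals $[ex(A),\varphi(A)]$ closed under union, convex, and (by unique generation) closed under intersection. Your explicit attention to why $[X]_{ex}\subseteq[X]_{\varphi}$ (distinct classes have distinct minima) is a worthwhile clarification of a step the paper leaves implicit, but it is not a different argument.
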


\begin{proposition}\cite{Co-V}
\label{outcast_u}
Let $(E,\varphi)$ be a violator space. The following assertions are equivalent:

(i) $(E,\varphi)$ is uniquely generated

(ii) \textbf{X5}: $(ex(X) \subseteq Y \subseteq X) \Rightarrow ex(X)=ex(Y)$ (the outcast property)

(iii) \textbf{X6}: $\varphi(ex(X))=\varphi(X)$

(iv) \textbf{X7}: $ex(\varphi(X))=ex(X)$
\end{proposition}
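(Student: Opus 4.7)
The plan is to close the cycle $(i)\Rightarrow(iii)\Rightarrow(ii)\Rightarrow(iv)\Rightarrow(i)$, using the inventory of already-established tools: Proposition \ref{exp}, Proposition \ref{exp-vs}, Proposition \ref{UQP}, Corollary \ref{Lemma}, Proposition \ref{expoint}, and Theorem \ref{TH}.

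For $(i)\Rightarrow(iii)$, I recall that Proposition \ref{exp} realizes $ex(X)$ as the finite intersection $\bigcap\{B\subseteq X:\varphi(B)=\varphi(X)\}$ of generators of $X$. Under unique generation, Proposition \ref{UQP} guarantees that the intersection of any two generators is again a generator; a straightforward finite induction promotes this to the whole family, yielding $\varphi(ex(X))=\varphi(X)$. For $(iii)\Rightarrow(ii)$, assume X6 and fix $ex(X)\subseteq Y\subseteq X$. Since $X\subseteq\varphi(X)=\varphi(ex(X))$, the chain rewrites as $ex(X)\subseteq Y\subseteq\varphi(ex(X))$, so self-convexity \textbf{V2} gives $\varphi(Y)=\varphi(X)$. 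Applying Proposition \ref{exp} to both $X$ and $Y$: the family $\{B\subseteq Y:\varphi(B)=\varphi(Y)\}$ is contained in $\{B\subseteq X:\varphi(B)=\varphi(X)\}$, whence $ex(X)\subseteq ex(Y)$; conversely, $ex(X)$ itself sits in the family defining $ex(Y)$, so $ex(Y)\subseteq ex(X)$. Double inclusion delivers X5.

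For $(ii)\Rightarrow(iv)$, Proposition \ref{exp-vs} furnishes $ex(\varphi(X))\subseteq ex(X)\subseteq X\subseteq\varphi(X)$. Invoking X5 with $\varphi(X)$ in the role of the outer set and $X$ as the middle set is then legal and yields $ex(\varphi(X))=ex(X)$, which is X7.

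The main obstacle is $(iv)\Rightarrow(i)$: a violator operator need not be monotone, so the standard closure-operator manoeuvres are unavailable. The plan is to derive the anti-exchange property directly and invoke Theorem \ref{TH}. Assume X7 and suppose, aiming at a contradiction, that $p,q\notin\varphi(X)$ and $p\in\varphi(X\cup q)$ while $q\in\varphi(X\cup p)$. Corollary \ref{Lemma} converts each membership into a cospanning equality, producing the chain $\varphi(X\cup p)=\varphi(X\cup q)=\varphi(X\cup p\cup q)$. Three applications of X7 collapse the corresponding extreme-point sets, giving $ex(X\cup p)=ex(X\cup p\cup q)$. Since $p\notin\varphi(X)$ forces $\varphi(X)\neq\varphi(X\cup p)$, Proposition \ref{expoint} places $p$ in $ex(X\cup p)$, hence in $ex(X\cup p\cup q)$; reading Proposition \ref{expoint} backwards then yields $\varphi(X\cup q)\neq\varphi(X\cup p\cup q)$, contradicting the chain just obtained. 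Thus anti-exchange holds, and Theorem \ref{TH} concludes unique generation.
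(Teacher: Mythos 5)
Your proof is correct. Note that the paper itself gives no argument for Proposition \ref{outcast_u} --- it is imported from \cite{Co-V} as a citation --- so there is no in-text proof to compare against; your write-up is a legitimate self-contained derivation from the tools the paper does state. Each link of your cycle checks out: $(i)\Rightarrow(iii)$ works because the generator family $\{B\subseteq X:\varphi(B)=\varphi(X)\}$ is nonempty (it contains $X$) and, by Proposition \ref{UQP}, closed under pairwise intersection, so finite induction applies to the intersection formula of Proposition \ref{exp}; $(iii)\Rightarrow(ii)$ correctly gets $\varphi(Y)=\varphi(X)$ from self-convexity and then sandwiches $ex(Y)$ between the two generator families; $(ii)\Rightarrow(iv)$ legitimately uses Proposition \ref{exp-vs} to verify the hypothesis $ex(\varphi(X))\subseteq X\subseteq\varphi(X)$ before invoking \textbf{X5} with $\varphi(X)$ as the outer set; and $(iv)\Rightarrow(i)$ is a clean reduction to the anti-exchange property via Theorem \ref{TH}, with Corollary \ref{Lemma} and Proposition \ref{expoint} used exactly as intended. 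The only point worth making explicit is the tacit assumption $p\neq q$ in the anti-exchange argument (standard, since for $p=q$ the property is vacuous by extensivity), together with the observation $p,q\notin X$, which you need so that $(X\cup p\cup q)-p=X\cup q$ when reading Proposition \ref{expoint} backwards; you do use both facts, and both follow from $p,q\notin\varphi(X)\supseteq X$.
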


\section{ Closure spaces}

\begin{definition}
Let $E$ be a finite set. $\tau:2^{E}\rightarrow2^{E}$ is a \textit{closure
operator on }$E$ if for all subsets $X,Y\subseteq E$ the following
properties are satisfied:

\textbf{C1}: $X \subseteq\tau(X)$ (extensivity)

\textbf{C2}: $X \subseteq Y\Rightarrow\tau(X)\subseteq\tau(Y)$ (isotonicity)

\textbf{C3}: $\tau(\tau(X))=\tau(X)$ (idempotence).
\end{definition}

$(E,\tau)$ is a \textit{closure space} if $\tau$ is a closure operator. A set
$A\subseteq E$ is \textit{closed }if $A=\tau(A)$. Clearly, the family of
closed sets $K=\{A\in E:A=\tau(A)\}$ is closed under intersection.
Conversely, any set system $(E,K)$ closed under intersection is a family of
closed sets of the closure operator
\begin{equation}
\label{tau}
\tau_{K}(X)=
{\displaystyle\bigcap}
\{A\in K:X\subseteq A\}.
\end{equation}

In a Euclidean space, a set is convex if it contains the line segment between
any two of its points. It is easy to see that the family of convex sets is
closed under intersection. In fact, the family of convex sets coincides with
the family of closed sets defined by a convex hull operator.

Convex geometries were invented by Edelman and Jamison in 1985 as proper
combinatorial abstractions of convexity \cite{Edelman}. There are various ways
to characterize finite convex geometries. One of them defines convex sets
using anti-exchange closure operators.

A closure space $(E,\tau)$ is a \textit{convex geometry} if it satisfies the
anti-exchange property.
The convex hull operator on Euclidean space is a classic example of a closure
operator with the anti-exchange property.

Consider relations between closure operators and self-convex operators.

\begin{lemma}(\cite{KempnerLevit})
\label{lem2} Isotonicity and idempotence imply self-convexity.
\end{lemma}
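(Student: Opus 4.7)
The plan is to unpack the self-convexity hypothesis and apply isotonicity in two directions, closing the loop with idempotence. Concretely, I assume $X \subseteq Y \subseteq \tau(X)$ and aim to establish the two inclusions $\tau(X)\subseteq \tau(Y)$ and $\tau(Y)\subseteq \tau(X)$.

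First I would use the inclusion $X \subseteq Y$ together with isotonicity (\textbf{C2}) to obtain $\tau(X)\subseteq \tau(Y)$ immediately. For the reverse inclusion, I would apply isotonicity to the second part of the hypothesis, namely $Y \subseteq \tau(X)$, which yields $\tau(Y)\subseteq \tau(\tau(X))$. At this point idempotence (\textbf{C3}) rewrites the right-hand side as $\tau(X)$, giving $\tau(Y)\subseteq \tau(X)$. Combining the two inclusions produces $\tau(X)=\tau(Y)$, which is precisely \textbf{V2}.

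There is no real obstacle here: the argument is a two-line chase through the definitions, and it does not even use extensivity (\textbf{C1}). The only thing worth noting in the write-up is that idempotence is applied to $\tau(X)$ rather than to $\tau(Y)$, since it is the nested expression $\tau(\tau(X))$ that must be collapsed.
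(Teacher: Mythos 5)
Your proof is correct: isotonicity applied to $X\subseteq Y$ gives $\tau(X)\subseteq\tau(Y)$, isotonicity applied to $Y\subseteq\tau(X)$ followed by idempotence gives $\tau(Y)\subseteq\tau(\tau(X))=\tau(X)$, and together these yield \textbf{V2}. The paper itself omits the proof (citing \cite{KempnerLevit}), but this is exactly the standard argument, and your observation that extensivity is not needed is accurate.
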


So we conclude with

\begin{theorem}(\cite{KempnerLevit})
\label{cltov} Every closure space is a violator space.
\end{theorem}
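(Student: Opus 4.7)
The plan is to verify directly that any closure operator $\tau$ on $E$ satisfies the two defining axioms \textbf{V1} and \textbf{V2} of a violator operator. Property \textbf{V1} (extensivity) is identical to the defining property \textbf{C1} of a closure operator, so it holds by hypothesis; no work is required there. Hence the only thing to check is self-convexity \textbf{V2}.

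For \textbf{V2}, I would simply invoke Lemma \ref{lem2}, which has already been stated in the paper and which asserts that isotonicity together with idempotence imply self-convexity. Since \textbf{C2} (isotonicity) and \textbf{C3} (idempotence) both hold by definition of a closure operator, Lemma \ref{lem2} applies and yields \textbf{V2} at once.

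There is really no obstacle here: the substantive content of the theorem is entirely packaged inside Lemma \ref{lem2}, whose one-line argument is the chain $X \subseteq Y \subseteq \tau(X) \Rightarrow \tau(X)\subseteq \tau(Y) \subseteq \tau(\tau(X)) = \tau(X)$, which forces $\tau(X)=\tau(Y)$. Combining \textbf{C1} with Lemma \ref{lem2} therefore establishes that $(E,\tau)$ is a violator space, completing the proof.
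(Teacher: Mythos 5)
Your proof is correct and follows exactly the route the paper intends: \textbf{V1} coincides with \textbf{C1}, and \textbf{V2} is supplied by Lemma \ref{lem2}, whose justification via the chain $\tau(X)\subseteq\tau(Y)\subseteq\tau(\tau(X))=\tau(X)$ you reproduce accurately. Nothing is missing.
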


Let $R$ be a cospanning relation of a closure space. Then every equivalence class of the cospanning relation  is closed under union (\textbf{R1}) and convex (\textbf{R2}). Easy to see that all closed sets are the inclusion-maximal sets of the equivalence classes. First note that idempotence implies $(X, \tau(X)) \in R$. Since $X \subseteq \tau(X)$ the family of closed sets $K$ coincides with the family of  maximal sets of  equivalence classes. Moreover, the cospanning partition of a closure space satisfies the \textit{accessibility property:}

\begin{proposition} \label{Augm_CS}
$\forall X \in K: (X-x,X) \notin R \Leftrightarrow X-x \in K$.
\end{proposition}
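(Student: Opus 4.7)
The plan is to prove the biconditional directly by unpacking both definitions in terms of the closure operator $\tau$. Recall that $R$ is the cospanning relation, so $(A,B)\in R$ iff $\tau(A)=\tau(B)$, and that $X\in K$ means $\tau(X)=X$. Throughout we may assume $x\in X$, since otherwise $X-x=X$ and both sides of the biconditional are trivially false.

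For the $(\Leftarrow)$ direction, I would suppose $X-x\in K$, so $\tau(X-x)=X-x$. Combined with $\tau(X)=X$ and $X-x\neq X$, this immediately yields $\tau(X-x)\neq\tau(X)$, i.e., $(X-x,X)\notin R$.

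For the $(\Rightarrow)$ direction, assume $(X-x,X)\notin R$, which by $X\in K$ translates to $\tau(X-x)\neq X$. The key observation is that $\tau(X-x)$ is sandwiched between $X-x$ and $X$: extensivity (\textbf{C1}) gives $X-x\subseteq\tau(X-x)$, and isotonicity (\textbf{C2}) applied to $X-x\subseteq X$ together with $\tau(X)=X$ gives $\tau(X-x)\subseteq X$. Since the interval $[X-x,X]$ in $2^E$ contains only the two sets $X-x$ and $X$ (as they differ by the single element $x$), and we have ruled out $\tau(X-x)=X$, the only remaining possibility is $\tau(X-x)=X-x$, hence $X-x\in K$.

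There is no real obstacle here — the proof is essentially a one-line squeeze argument using extensivity and isotonicity, and the plan is just to make the two-element interval observation explicit. The only care needed is to dispatch the degenerate case $x\notin X$ at the outset (or to regard it as vacuous) so that $X-x\neq X$ can be used freely in the $(\Leftarrow)$ direction.
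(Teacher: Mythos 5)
Your proof is correct and essentially identical to the paper's: both directions rest on the squeeze $X-x\subseteq\tau(X-x)\subseteq\tau(X)=X$ given by extensivity and isotonicity, together with the observation that the interval $[X-x,X]$ contains only the two sets $X-x$ and $X$. One minor correction to your opening remark: when $x\notin X$ the two sides are not both false --- the left side $(X-x,X)\notin R$ is false, but $X-x=X\in K$ makes the right side true --- so the convention $x\in X$ is genuinely required for the proposition to hold as stated, not merely a harmless normalization.
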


\begin{proof}
1. Let  $(X-x,X) \notin R$, and so $\tau(X-x) \neq \tau(X)$. From extensivity and isotonicity of $\tau$ it follows $X-x \subseteq \tau(X-x) \subset \tau(X)=X$. Hence $\tau(X-x) = X-x$, that means $X-x \in K$.

2. If $X-x \in K$, then $\tau(X-x) = X-x$, that means $(X-x,X) \notin R$.
\end{proof}

Since $\tau(X-x) \neq \tau(X)$ is equivalent to $x \in ex(X)$ (see Proposition \ref{expoint}), Proposition \ref{Augm_CS} may be rewritten as follows

\begin{equation}
\label{Augm_P}
\forall X \in K: x \in ex(X) \Leftrightarrow X-x \in K.
\end{equation}

This property is mentioned in \cite{Greedoids,CL} for convex geometries.

If $ex$ is a non-empty choice operator, i.e., $ex(X)\neq\emptyset$ for every $X\neq\emptyset$,  that a closure system is an accessible system  in which every nonempty closed set $X$ contains an element $x$ such that $X-x$ is closed. So in follows we call the property (\ref{Augm_P})  an accessibility property as well.

%\begin{proposition}(\cite{Monjardet})
%\label{Aug_cs} Let $(E,\tau)$ be a closure space.The following conditions are
%equivalent:
%
%(i) $ex$ is a non-empty choice operator,
%
%(ii) for any closed set $X$, there exists $x \in X$ such that $X-x$ is a closed set,
%
%(iii) there exists a chain  of closed sets $\emptyset=X_{0}\subset X_{1}\subset...\subset X_{k}=E$ such that
%$X_{i}=X_{i-1}\cup x_{i}$  for $0\leq i\leq
%k=|E|$.
%\end{proposition}

Consider now a special class of closure spaces - convex geometries. Since convex geometries are uniquely generated closure spaces $(E,\tau)$(\cite{Greedoids,CL}), they determine a hypercube partition $\mathcal{P}$ into equivalence classes
$[A]_{\tau}:=\{X\subseteq E:\tau(X)=\tau(A)\} = [ex(A),\tau(A)]$.
The family of closed sets ($X=\tau(X)$), just being a convex geometry, is exactly the family of maximal sets of the intervals.
In addition the hypercube partition $\mathcal{P}$ has an accessibility property that for cospanning partitions looks as follows.

\begin{equation}
\forall [A,B]\in \mathcal{P} \text{ and } x \in A \text{ there is } C \subseteq E \text{ such that } [C,B-x] \in \mathcal{P}\label{Eq_CL}%
\end{equation}

Let we have some hypercube partition $\mathcal{P^{'}}$ with accessibility property (\ref{Eq_CL}). Denote $\mathcal{N}=\{B:[A,B]\in \mathcal{P^{'}}\}$.
\begin{lemma}
\label{lem_cg}
If $[A,B] \in \mathcal{P^{'}}$, and $C \subseteq B$, but $C \notin [A,B]$, then there exists $x \in B-C$ such that $B-x \in \mathcal{N}$.
\end{lemma}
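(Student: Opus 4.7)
The plan is to exploit the accessibility property (\ref{Eq_CL}) after a single observation about why $C$ fails to lie in the interval $[A,B]$.

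First I would unpack the hypothesis. Since $[A,B] = \{D \subseteq E : A \subseteq D \subseteq B\}$ and we already have $C \subseteq B$, the only way $C$ can fail to belong to $[A,B]$ is that $A \not\subseteq C$. In particular, there exists some element $x \in A$ with $x \notin C$. Because every interval is nonempty we have $A \subseteq B$, so this $x$ also satisfies $x \in B$, and hence $x \in B - C$: this is already an element of the required form.

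Next I would apply the accessibility property (\ref{Eq_CL}) to the interval $[A,B]$ at this $x \in A$: there exists some $C' \subseteq E$ with $[C', B-x] \in \mathcal{P}'$. By the definition $\mathcal{N} = \{B : [A,B] \in \mathcal{P}'\}$, this says exactly that $B - x \in \mathcal{N}$, which completes the argument.

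There is essentially no obstacle here; the proof is a direct unpacking of definitions. The only move worth highlighting is the collapse ``$C \subseteq B$ together with $C \notin [A,B]$ implies $A \not\subseteq C$,'' which is precisely what lets the accessibility hypothesis fire on an element $x \in A \cap (B - C)$ and deliver a set of the form $B - x$ as the top of some interval in $\mathcal{P}'$.
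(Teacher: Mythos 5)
Your proof is correct and follows exactly the paper's argument: extract $x \in A$ with $x \notin C$ (hence $x \in B-C$) from the failure of $C$ to contain $A$, then apply the accessibility property (\ref{Eq_CL}) to get $B-x \in \mathcal{N}$. The only difference is that you spell out the definitional unpacking more explicitly than the paper does.
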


\begin{proof}
 Since $C \notin [A,B]$, there is $x \in A$ such that $x \notin C$, so $x \in B-C$. Then the property (\ref{Eq_CL}) implies $B-x \in \mathcal{N}$.
\end{proof}

Since the maximal element in each interval is unique we obtain, that the family $\mathcal{N}$ satisfies the \textit{chain property}: for all $X,Y \in \mathcal{N}$, and $X\subset Y$, there exists a chain
$X=X_{0}\subset X_{1}\subset...\subset X_{k}=Y$ such that
$X_{i}=X_{i-1}\cup x_{i}$ and $X_{i}\in\mathcal{N}$ for $0\leq i\leq
k$.

\begin{theorem}
(i) If $(E,\tau)$ is a convex geometry, then equivalence classes of the cospanning relation associated with $\tau$ form a hypercube partition $\mathcal{P^{'}}$ satisfying the accessibility property.

(ii) Every hypercube partition $\mathcal{P^{'}}$ satisfying accessibility property (\ref{Eq_CL}) is the partition  of $H(E)$ into equivalence classes of the cospanning relation of a convex geometry.
\end{theorem}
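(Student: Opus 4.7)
For part (i), the argument reduces to material already in place. Since $(E,\tau)$ is a convex geometry, $\tau$ is a closure operator with the anti-exchange property. Theorem \ref{cltov} then makes $(E,\tau)$ a violator space, and Theorem \ref{TH} (via the equivalence of anti-exchange and unique generation) makes it uniquely generated. Consequently, the cospanning classes are the intervals $[X]_{\tau}=[ex(X),\tau(X)]$, so $\mathcal{P}'$ is a hypercube partition. For accessibility, fix any interval $[A,B]\in\mathcal{P}'$ and $x\in A$; then $B$ is closed, $A=ex(B)$, and equation (\ref{Augm_P}) yields that $B-x$ is closed as well, so $B-x$ is the top of the interval $[ex(B-x),B-x]\in\mathcal{P}'$, providing the required $C$.

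For part (ii), the plan is to define $\varphi(X)$ to be the top of the unique interval of $\mathcal{P}'$ containing $X$. By the Clarkson-type theorem quoted just before this one, $\varphi$ is a uniquely generated violator operator; in particular it is extensive, self-convex, idempotent, and by Theorem \ref{TH} satisfies the anti-exchange property. Since distinct intervals of a hypercube partition necessarily have distinct tops, two sets have equal $\varphi$-value exactly when they lie in the same interval of $\mathcal{P}'$, so the cospanning partition of $(E,\varphi)$ coincides with $\mathcal{P}'$ by construction. What remains is to promote $\varphi$ to a closure operator; in the presence of the other properties this reduces to establishing isotonicity, after which $(E,\varphi)$ is a closure space with anti-exchange, i.e.\ a convex geometry.

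The main obstacle is isotonicity of $\varphi$, and the plan is to argue by strong induction on $n=|\varphi(Y)|$ that $X\subseteq Y$ implies $\varphi(X)\subseteq\varphi(Y)$. The base case $n=0$ forces $Y=\emptyset$ by extensivity, and the claim is immediate. For the inductive step, let $[C,D]$ be the interval containing $Y$, so $D=\varphi(Y)$ and $C\subseteq Y\subseteq D$. If $C\subseteq X$ then $C\subseteq X\subseteq Y\subseteq D$ places $X$ in $[C,D]$, giving $\varphi(X)=D=\varphi(Y)$ directly. Otherwise choose any $y\in C\setminus X$; applying accessibility (\ref{Eq_CL}) to $[C,D]$ at $y\in C$ gives $D-y\in\mathcal{N}$, so $\varphi(D-y)=D-y$ and $|\varphi(D-y)|=n-1$. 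Because $X\subseteq D$ and $y\notin X$, we have $X\subseteq D-y$, and the induction hypothesis yields $\varphi(X)\subseteq\varphi(D-y)=D-y\subseteq D=\varphi(Y)$. This completes the induction and hence the proof; the key insight is that the accessibility property is exactly strong enough to let us strictly shrink the value of $\varphi$ on the right-hand side by one element whenever $X$ fails to lie in the same interval as $Y$.
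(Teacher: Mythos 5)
Your proposal is correct and follows essentially the same route as the paper: define the operator sending each set to the top of its containing interval, and prove isotonicity by repeatedly using accessibility to strip one element at a time from $\varphi(Y)$ until $X$'s interval is reached (the paper phrases this as an explicit descending chain via Lemma \ref{lem_cg}, you as strong induction on $|\varphi(Y)|$), with unique generation then giving anti-exchange. The only cosmetic difference is that you invoke the Clarkson-type theorem for extensivity, idempotence and unique generation, where the paper verifies these directly.
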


\begin{proof}
It remains to prove (ii). For each
$X\subseteq E$ there is only one interval $[A,B]$ containing $X$. Then for
every set $X$, we define $\alpha(X)=B$. To prove that $(E,\alpha)$ is a
convex geometry, it is enough
to show that $(E,\alpha)$ is a uniquely generated closure space. To demonstrate it, one has to
check that $\alpha$ satisfies extensivity, isotonicity and idempotence.

Extensivity follows from $X\subseteq B=\alpha(X)$. Idempotence is obviously.
Let $X \subseteq Y$. Prove $\alpha(X)\subseteq \alpha(Y)$. If $X$ and $Y$ belong to the same interval $[A,B]$, then $\alpha(X)=\alpha(Y)=B$. Let $X \in [C,D]$ and $Y \in [A,B]$, then $X \subseteq B$. Hence Lemma \ref{lem_cg} implies that there is a chain $B \supseteq B_{1}=B-x_{1} \supseteq B_{2}=B_{1}-x_{2}\supseteq...\supseteq B_{k}$,  where all elements of the chain $B_{i} \in \mathcal{N}$ and $X \subseteq B_{i}$. The chain ends with $X \in [A_{k},B_{k}]$, i.e., $B_{k}=D=\alpha(X)$. So $\alpha(X) \subseteq B=\alpha(Y)$. Thus the operator $\alpha$ is a closure operator.

To prove unique generation notice that $\alpha(X)=\alpha(Y)$
means $X,Y\in [A,B]$. Then $\alpha(X\cap Y)=\alpha(X)$, and from
Proposition \ref{UQP} immediately follows that the closure space $(E,\alpha)$
is uniquely generated. It is easy to see that the partition to equivalence classes of cospanning relation w.r.t. $\alpha$ coincides with $\mathcal{P^{'}}$.
%( Then  - to prove unique generation notice that for each equivalence class $[A,B]$ there is a unique generator $A$.)
\end{proof}

The equivalent statement in terms of cospanning relations looks as following.
\begin{theorem}
Let $E$ be a finite set and $R \subseteq 2^{E} \times 2^{E}$ an equivalence relation on $2^{E}$. Then $R$ is the cospanning relation of a convex geometry if and only if the following conditions hold for every $X,Y,Z \subseteq E$ and $x,y \notin X$:

\textbf{R1}: if $(X,Y) \in R$, then $(X,X \cup Y) \in R$

\textbf{R2}: if $X \subseteq Y \subseteq Z$ and $(X,Z) \in R$, then $(X,Y) \in R$.

\textbf{R3}: if $(X,Y) \in R$, then $(X,X \cap Y) \in R$

\textbf{R4}: if $(X,X \cup z) \notin R$ for all $z \notin X$ and $(X,X - x) \notin R$,  then $(X - x,X -x \cup y) \notin R$ for all $y \notin X$.

\end{theorem}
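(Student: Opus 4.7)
The plan is to reduce everything to the immediately preceding theorem, which characterizes hypercube partitions with the accessibility property (\ref{Eq_CL}) as precisely the cospanning partitions of convex geometries. The work is then essentially a translation between the $R$-language and the $\varphi$-language.

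For necessity, suppose $R$ is the cospanning relation of a convex geometry $(E,\tau)$. By Theorem \ref{cltov} the pair $(E,\tau)$ is a violator space, so R1 and R2 follow from Theorem \ref{T_rel}, and R3 follows from the fact that convex geometries are uniquely generated via Proposition \ref{UQP}. For R4, the hypothesis ``$(X,X\cup z)\notin R$ for all $z\notin X$'' combined with Corollary \ref{Lemma} forces $X=\tau(X)$, i.e.\ $X$ is closed, while $(X,X-x)\notin R$ together with Proposition \ref{expoint} means $x\in ex(X)$. The accessibility property (\ref{Augm_P}) then gives that $X-x$ is closed as well, which rewrites as $(X-x,(X-x)\cup y)\notin R$ for every $y\notin X-x$, and in particular for every $y\notin X$.

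For sufficiency, assume $R$ satisfies R1--R4. Theorem \ref{T_rel} produces a violator space $(E,\varphi)$ whose cospanning relation is $R$, and R3 together with Proposition \ref{UQP} promotes it to a uniquely generated violator space; hence its equivalence classes form a hypercube partition $\mathcal{P}$ with $[A]_{\varphi}=[ex(A),\varphi(A)]$. It remains to verify (\ref{Eq_CL}). Fix an interval $[A,B]\in\mathcal{P}$ and any $x\in A$. Since $B$ is the unique maximum of $[A,B]$ we have $(B,B\cup z)\notin R$ for every $z\notin B$, and since $x\in A=ex(B)$, Proposition \ref{expoint} gives $(B,B-x)\notin R$. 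Applying R4 with $X=B$ yields $(B-x,(B-x)\cup y)\notin R$ for every $y\notin B$; combined with the case $y=x$, which is handed to us by $(B-x,B)\notin R$, this shows $B-x$ is the unique maximum of its own equivalence class, namely some interval $[C,B-x]\in\mathcal{P}$. This is exactly (\ref{Eq_CL}), and the preceding theorem then promotes $(E,\varphi)$ to a convex geometry whose cospanning relation is $R$.

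The main subtle point I expect is the bookkeeping around R4: its quantifier ``for all $y\notin X$'' pointedly excludes the case $y=x$, so one must remember to recover that case from the other hypothesis $(X,X-x)\notin R$ in order to conclude that $X-x$ is genuinely the unique maximum of its equivalence class. Once this is noticed, R4 becomes a clean restatement of (\ref{Augm_P})/(\ref{Eq_CL}) and the argument collapses onto the preceding theorem.
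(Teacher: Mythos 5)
Your proposal is correct and follows essentially the same route as the paper: the paper presents this theorem as the relation-language restatement of the preceding hypercube-partition theorem, and its (brief) justification is exactly your translation, namely that \textbf{R1}--\textbf{R3} yield a uniquely generated violator space with interval classes $[ex(A),\varphi(A)]$, while \textbf{R4} decodes (via Corollary \ref{Lemma} and Proposition \ref{expoint}) to the accessibility property (\ref{Eq_CL})/(\ref{Augm_P}). Your explicit handling of the excluded case $y=x$ via $(X,X-x)\notin R$ is a detail the paper leaves implicit, and it is handled correctly.
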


It is easy to see that the property \textbf{R4} is equivalent to accessibility property if a closure space is uniquely generated.  Indeed, \textbf{R3} implies that for each $X \subseteq E$ there is an unique basis $ex(X)$, and so  $(X,X - x) \notin R  \Leftrightarrow  x\in ex(X)$. The second condition says that $X-x$ is a closed set.

\section{Greedoids}

\subsection{Violator spaces and greedoids}
Let we have a greedoid $(E,\mathcal{F})$, i.e.,

(i) $\emptyset \in \mathcal{F}$

(ii) $X,Y \in \mathcal{F}, |X|>|Y| \Rightarrow \exists x \in X-Y. Y \cup x \in \mathcal{F}$

Elements of $\mathcal{F}$ are calls feasible sets.

The rank function of a greedoid is defined as follows:

$r(X)=max\{|A|:A \subseteq X, A \in \mathcal{F}\}$.
Define \textit{(rank) closure operator} (\cite{Greedoids}): $\sigma(X)=\{x: r(X \cup x)= r(X)\}$. If $\Gamma(X)=\{x \in E-X: X \cup x \in \mathcal{F}\}$, then $\sigma(X)=E-\Gamma(X)$.

\begin{lemma} \cite{Greedoids}
\label{L_Gr}
Let $(E,\mathcal{F})$ be a greedoid, then

(i) $X \subseteq \sigma(X)$.

(ii) $(X\subseteq \sigma(Y) \wedge Y\subseteq \sigma(X))\Rightarrow \sigma
(X)=\sigma(Y)$.

(iii) if $x,y \in E - X$, and $X \cup x \in \mathcal{F}$, then $x \in \sigma(X\cup y)\Rightarrow y \in \sigma(X\cup x)$.
\end{lemma}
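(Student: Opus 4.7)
The plan is to invoke three standard properties of the greedoid rank function $r$: monotonicity ($X \subseteq Y \Rightarrow r(X) \leq r(Y)$) and subcardinality ($r(X) \leq |X|$), which are immediate from the definition $r(X)=\max\{|A|:A\subseteq X, A\in\mathcal{F}\}$, together with local submodularity, namely $r(X \cup x) = r(X) = r(X \cup y) \Rightarrow r(X \cup x \cup y) = r(X)$, which is a classical consequence of the greedoid exchange axiom. Throughout I will translate each assertion $z \in \sigma(W)$ into the rank equality $r(W \cup z) = r(W)$ and manipulate ranks.

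Part (i) is immediate: for $x \in X$, $X \cup x = X$, so $r(X \cup x) = r(X)$ and $x \in \sigma(X)$.

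For part (iii), the hypothesis $X \cup x \in \mathcal{F}$ combined with subcardinality and the fact that $X \cup x$ is a feasible subset of itself gives $r(X \cup x) = |X| + 1$. Monotonicity yields $r(X \cup x \cup y) \geq |X| + 1$, while subcardinality gives $r(X \cup y) \leq |X| + 1$. The hypothesis $x \in \sigma(X \cup y)$ reads $r(X \cup x \cup y) = r(X \cup y)$; chaining these forces $r(X \cup x \cup y) = r(X \cup y) = |X| + 1 = r(X \cup x)$. The equality $r(X \cup x \cup y) = r(X \cup x)$ is precisely $y \in \sigma(X \cup x)$.

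Part (ii) is the main content. I would first establish the auxiliary claim that $\sigma(X \cup z) = \sigma(X)$ for every $z \in \sigma(X)$. For any $w \in \sigma(X)$, applying local submodularity to $r(X \cup z) = r(X) = r(X \cup w)$ yields $r(X \cup z \cup w) = r(X) = r(X \cup z)$, so $w \in \sigma(X \cup z)$. Conversely, $w \in \sigma(X \cup z)$ gives $r(X) \leq r(X \cup w) \leq r(X \cup z \cup w) = r(X \cup z) = r(X)$, so $w \in \sigma(X)$. Iterating the claim one element at a time over the elements of $Y \setminus X$ delivers $\sigma(X \cup Y) = \sigma(X)$ whenever $Y \subseteq \sigma(X)$; by symmetry $\sigma(X \cup Y) = \sigma(Y)$ when $X \subseteq \sigma(Y)$, and transitivity of equality closes the argument.

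The main obstacle is local submodularity itself, which is not evident from the bare greedoid axioms but rests on the exchange property applied to maximum-size feasible subsets of $X$, $X \cup x$, $X \cup y$, and $X \cup x \cup y$. A secondary subtlety is the inductive step in (ii): at each iteration one must know that the next element of $Y$ still lies in the closure of the partially enlarged set, and the auxiliary claim $\sigma(X \cup z) = \sigma(X)$ is exactly what licenses this propagation.
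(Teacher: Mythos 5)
The paper imports this lemma verbatim from Korte--Lov\'asz--Schrader and gives no proof of its own (the only related text is the short rank computation following the lemma, which analyzes the hypotheses of (iii) and is essentially identical to your argument for that part), so there is no in-paper proof to compare against; your proposal must stand on its own, and it does. Parts (i) and (iii) are complete and correct: (iii) needs only monotonicity and subcardinality of $r$ plus the observation that $X\cup x\in\mathcal{F}$ forces $r(X\cup x)=|X|+1$, exactly as you argue. Part (ii) is also correct: the auxiliary claim $\sigma(X\cup z)=\sigma(X)$ for $z\in\sigma(X)$ is proved cleanly in both directions (local submodularity one way, monotonicity squeeze the other), and the element-by-element iteration over $Y-X$ is properly licensed because the claim preserves the closure at every step. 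The one ingredient you lean on without proof, local submodularity $\bigl(r(X\cup x)=r(X)=r(X\cup y)\Rightarrow r(X\cup x\cup y)=r(X)\bigr)$, is indeed classical and follows from a single application of the augmentation axiom: if a maximum feasible subset $A$ of $X\cup x\cup y$ had $|A|>r(X)=|B|$ for $B$ a maximum feasible subset of $X$, augmenting $B$ from $A$ would produce a feasible set of size $r(X)+1$ inside one of $X$, $X\cup x$, or $X\cup y$, a contradiction; spelling out these three lines would make your proof fully self-contained. A stylistic remark: the paper later observes (Proposition \ref{SC_G}) that property (ii) together with extensivity is equivalent to self-convexity $(X\subseteq Y\subseteq\sigma(X)\Rightarrow\sigma(X)=\sigma(Y))$, which is the form actually used downstream; your route via the rank function proves the stronger symmetric form (ii) directly, which is what the lemma asserts.
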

The last property (iii) may be considered as a weaker version of the Steinitz-MacLine exchange property of matroids:

 if $x \notin \sigma(X)$,  $x \in \sigma(X\cup y)$\ then $y \in \sigma(X\cup x)$.

Since (ii) is equivalent to self-convexity (Proposition \ref{SC_G}) we can see that
 greedoids may be considered as a subclass of violator spaces.

\begin{lemma} \cite{Greedoids}
Let $(E,\mathcal{F})$ be a greedoid, then

$\mathcal{F}=\{X\subseteq E: \forall x \in X: x \notin \sigma(X-x)\}$.
\end{lemma}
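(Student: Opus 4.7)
The plan is to prove the two inclusions separately, working directly from the definitions of the rank function $r$ and the closure operator $\sigma(X)=\{x:r(X\cup x)=r(X)\}$, and using the basic monotonicity fact that $Y\subseteq Z$ implies $r(Y)\le r(Z)$ (which holds because every feasible subset of $Y$ is a feasible subset of $Z$).

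For the forward inclusion, suppose $X\in\mathcal{F}$. Then $X$ is itself a feasible subset of $X$ of maximum possible cardinality, so $r(X)=|X|$. For any $x\in X$, the set $X-x$ satisfies $r(X-x)\le |X-x|=|X|-1<|X|=r(X)=r((X-x)\cup x)$, so by the definition of $\sigma$ we get $x\notin\sigma(X-x)$. This step is essentially immediate.

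For the reverse inclusion, I would argue by contrapositive. Assume $X\notin\mathcal{F}$, so $r(X)<|X|$. Pick a feasible subset $A\subseteq X$ with $|A|=r(X)$; since $|A|<|X|$, there exists some $x\in X-A$. Then $A\subseteq X-x\subseteq X$, so by monotonicity $r(X)\ge r(X-x)\ge |A|=r(X)$, forcing $r(X-x)=r(X)=r((X-x)\cup x)$. Hence $x\in\sigma(X-x)$ while $x\in X$, contradicting the hypothesis on $X$.

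The main obstacle, if one can call it that, is simply noticing the right witness $x\in X-A$ in the second direction; once chosen, monotonicity of $r$ squeezes $r(X-x)$ between $|A|$ and $r(X)$ and the rest is mechanical. No use of the exchange property (iii) from Lemma~\ref{L_Gr} or of self-convexity is needed; only the definition of $r$ and the trivial monotonicity suffice.
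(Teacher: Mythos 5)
Your proof is correct. Note first that the paper itself gives no proof of this lemma: it is quoted from Korte--Lov\'{a}sz--Schrader, so there is no in-paper argument to match against. Your derivation works directly from the rank-function definition $\sigma(X)=\{x:r(X\cup x)=r(X)\}$: the forward inclusion is the observation $X\in\mathcal{F}\Rightarrow r(X)=|X|>|X-x|\geq r(X-x)$, and the reverse inclusion correctly squeezes $r(X-x)$ between $|A|$ and $r(X)$ for a maximum feasible $A\subseteq X$ and a witness $x\in X-A$ (which exists exactly because $X\notin\mathcal{F}$ forces $r(X)<|X|$). This is a genuinely different, and more elementary, route than the machinery the paper uses for the neighbouring Lemma \ref{BB}, where the equivalent ``minimal generator'' characterization is obtained from Corollary \ref{Lemma} and the convexity property of Lemma \ref{un}, i.e., from the violator-space axioms satisfied by $\sigma$. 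What your approach buys is independence from those axioms: as you observe, neither the exchange property nor self-convexity is invoked, so your argument in fact establishes the identity for any set system containing $\emptyset$ once $\sigma$ is defined through $r$; the greedoid structure only matters for the further results built on top of this lemma. The one definitional subtlety worth keeping in mind is that the paper also writes $\sigma(X)=E-\Gamma(X)$, which agrees with the rank-based definition only on feasible sets; since you consistently use the rank-based definition, your proof is unaffected.
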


So each element of a feasible set is an extreme point of the set, i.e., $\mathcal{F}=\{X\subseteq E: X=ex(X)\}$.

The following definition of feasible sets is equivalent:
\begin{lemma}
\label{BB}
Let $(E,\mathcal{F})$ be a greedoid, then

$\mathcal{F}=\{X\subseteq E: \forall Y \subset X: \sigma(X)\neq \sigma(Y) \}$ - the family of bases -  minimal generators w.r.t. operator $\sigma$.
\end{lemma}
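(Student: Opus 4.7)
The plan is to show each of the two characterizations of $\mathcal{F}$ is equivalent to the statement that every proper subset of $X$ has strictly smaller $\sigma$-image. The key enabling observation is that $(E,\sigma)$ is a violator space: Lemma \ref{L_Gr}(i)--(ii) gives extensivity and \textbf{VV2}, which by Proposition \ref{SC_G} is equivalent to self-convexity \textbf{V2}. Hence all the violator-space machinery applies to $\sigma$, in particular Proposition \ref{expoint} (so $x\in ex(X)\Leftrightarrow\sigma(X-x)\neq\sigma(X)$) and the convexity part of Lemma \ref{un} (if $Y\subseteq Z\subseteq X$ and $\sigma(Y)=\sigma(X)$, then $\sigma(Z)=\sigma(X)$).

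First I would combine the previous lemma with Proposition \ref{expoint} to restate the given characterization as
\[
\mathcal{F}=\{X\subseteq E:\forall x\in X,\ \sigma(X-x)\neq\sigma(X)\}.
\]
Denote the candidate family by $\mathcal{G}=\{X\subseteq E:\forall Y\subsetneq X,\ \sigma(Y)\neq\sigma(X)\}$. The inclusion $\mathcal{G}\subseteq\mathcal{F}$ is immediate by specializing the defining condition of $\mathcal{G}$ to the sets $Y=X-x$ for $x\in X$.

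For the reverse inclusion $\mathcal{F}\subseteq\mathcal{G}$, take $X\in\mathcal{F}$ and any proper subset $Y\subsetneq X$. Pick any $x\in X\setminus Y$, so that $Y\subseteq X-x\subseteq X$. Suppose for contradiction that $\sigma(Y)=\sigma(X)$. Then convexity (Lemma \ref{un}) forces $\sigma(X-x)=\sigma(X)$ as well, contradicting the assumption $x\notin\sigma(X-x)$ (equivalently $x\in ex(X)$) that comes from $X\in\mathcal{F}$. Hence $\sigma(Y)\neq\sigma(X)$, giving $X\in\mathcal{G}$.

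There is no real obstacle here; the only subtle point is not trying to invoke unique generation (which fails for general greedoids, e.g.\ matroids), and instead relying purely on convexity in the violator space $(E,\sigma)$ to propagate the equality $\sigma(Y)=\sigma(X)$ up to the intermediate set $X-x$. The whole argument is short once the previous lemma is reinterpreted via Proposition \ref{expoint}.
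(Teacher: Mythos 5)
Your proof is correct and follows essentially the same route as the paper: both directions are handled by reducing the general condition on proper subsets $Y$ to the singleton-deletion condition via Proposition \ref{expoint} (the paper uses the equivalent Corollary \ref{Lemma}), and the nontrivial inclusion is obtained by propagating $\sigma(Y)=\sigma(X)$ up to $\sigma(X-x)=\sigma(X)$ using the convexity part of Lemma \ref{un}. Your remark about deliberately avoiding unique generation is a sensible clarification, but the argument itself matches the paper's.
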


\begin{proof}
If for all $Y \subset X: \sigma(X)\neq \sigma(Y) $, then for all $x \in X$ holds $\sigma(X)\neq\sigma(X-x)$, and so ( from Corollary \ref{Lemma}) $x \notin \sigma(X-x)$ (or $X = ex(X)$). If there exists $Y \subset X$ such that $\sigma(X) = \sigma(Y)$, then for each $x \in X-Y$ convexity implies $\sigma(X) = \sigma(X-x)$.
\end{proof}

\begin{remark}
For a greedoid $(E,\mathcal{F})$ a basis of $X \subseteq E$ is defined as a maximal feasible subset of $X$, while for a space $(E, \sigma)$ a basis is defined as a minimal generator of $X$. Lemma \ref{BB} provides the equivalence of two definitions.
\end{remark}

Consider the third property (iii). If $x,y \in E - X$ and $x \in \sigma(X\cup y)$, then $r(X\cup x \cup y)=r(X\cup y) \leq |X|+1$.  Since $X \cup x \in \mathcal{F}$, we have $|X|+1 \leq r(X\cup x \cup y)=r(X\cup y) \leq |X|+1=r(X\cup x)$. Hence $X \cup y \in \mathcal{F}$. Corollary \ref{Lemma} implies that  $\sigma(X \cup x)=\sigma(X\cup x \cup y)=\sigma(X \cup y)$, and so $X\cup x \cup y$ has two bases $X\cup x$ and $X \cup y$.

The third property (iii) is equivalent to the following property:

(iv): if $\sigma(X \cup y)=\sigma(X\cup x \cup y)$, but $\sigma(X \cup x) \neq \sigma(X\cup x \cup y)$, then $X \cup x \notin \mathcal{F}$, i.e., there exists a $z \in X \cup x$ such that $\sigma(X \cup x - z)=\sigma(X\cup x)$.

\subsection{Cospanning relation of greedoids}

Let $(E,\mathcal{F})$ be a greedoid and $\sigma$ be a closure operator of the greedoid. The two sets $X$ and $Y$ are \textit{equivalent} (or cospanning)
if $\sigma(X)=\sigma(Y)$.  In what follows, $\mathcal{P}$ denotes a partition
of $H(E)$ ($2^{E}$) into equivalence classes with regard to this relation, and
$[A]_{\sigma}=\{X\subseteq E:\sigma(X)=\sigma(A)\}$. The following theorem characterizes the cospanning relation of greedoids.

\begin{theorem}
\cite{Greedoids}
\label{T_Gr}
Let $E$ be a finite set and $R \subseteq 2^{E} \times 2^{E}$ an equivalence relation on $2^{E}$. Then $R$ is the cospanning relation of a greedoid if and only if the following conditions hold for every $X,Y,Z \subseteq E$ and $x,y \notin X$:

\textbf{R1}: if $(X,Y) \in R$, then $(X,X \cup Y) \in R$

\textbf{R2}: if $X \subseteq Y \subseteq Z$ and $(X,Z) \in R$, then $(X,Y) \in R$.

\textbf{R4}: if $(X \cup y,X \cup x \cup y) \in R$, but $(X \cup x,X \cup x \cup y) \notin R$, then there exists an element $z \in X \cup x$ such that $(X \cup x - z,X \cup x) \in R$.
\end{theorem}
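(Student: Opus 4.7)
The plan is to handle the two implications separately. The forward direction is essentially a translation exercise, while the backward direction constructs a greedoid from $R$, and the main obstacle is verifying the exchange axiom.

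For the only-if direction, properties R1 and R2 follow immediately from Theorem \ref{T_rel} once one observes that any greedoid rank closure $\sigma$ is a violator operator: Lemma \ref{L_Gr}(i)--(ii) gives extensivity and property VV2 of Proposition \ref{SC_G}, which is equivalent to self-convexity. For R4, assume its hypotheses. Via Corollary \ref{Lemma}, the membership $(X\cup y, X\cup x\cup y)\in R$ reads $x\in\sigma(X\cup y)$, while $(X\cup x, X\cup x\cup y)\notin R$ reads $y\notin\sigma(X\cup x)$. The contrapositive of Lemma \ref{L_Gr}(iii) then yields $X\cup x\notin \mathcal{F}$, and by Lemma \ref{BB} some $z\in X\cup x$ satisfies $\sigma(X\cup x - z)=\sigma(X\cup x)$, which is exactly the conclusion of R4.

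For the backward direction, given $R$ satisfying R1, R2, R4, I first invoke Theorem \ref{T_rel} to obtain a violator space $(E,\sigma)$ whose cospanning relation is $R$ and whose operator $\sigma(X)$ returns the unique inclusion-maximal member of $[X]_R$. I then define $\mathcal{F}:=\{X\subseteq E: \forall Y\subsetneq X,\ (X,Y)\notin R\}$, the minimal elements of the equivalence classes, and must show that $(E,\mathcal{F})$ is a greedoid whose rank closure recovers $\sigma$. The set $\emptyset$ lies in $\mathcal{F}$ vacuously.

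The central obstacle is the greedoid exchange axiom: for $X, Y\in\mathcal{F}$ with $|X|>|Y|$, exhibit $x\in X-Y$ with $Y\cup x\in\mathcal{F}$. I would argue by contradiction, selecting a counterexample minimizing $|X-Y|$. For each $x\in X-Y$, failure of $Y\cup x\in\mathcal{F}$ produces a witness $z\in Y\cup x$ with $(Y\cup x - z, Y\cup x)\in R$: the case $z=x$ forces $x\in\sigma(Y)$, while $z\in Y$ exhibits a cospanning set $(Y-z)\cup x\in [Y\cup x]_R$ whose overlap with $X$ strictly exceeds $|Y\cap X|$. Invoking R4 on a carefully chosen triple (base $Y-z$ together with $x$ and a suitable $y\in X-Y$), combined with R2 to slide within the equivalence class, should collapse the witness chain and contradict the minimality of $|X-Y|$; the bookkeeping across these cases is the delicate step. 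Once the greedoid axioms are in place, verifying that the rank closure of $(E,\mathcal{F})$ coincides with $\sigma$ is routine, since both operators send $X$ to the unique inclusion-maximal element of $[X]_R$, one by construction and the other via Lemma \ref{BB} identifying feasible sets with minimal generators.
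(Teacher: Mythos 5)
Your forward direction is correct and matches the paper: R1 and R2 come from the violator-space structure of $\sigma$ (Lemma \ref{L_Gr}(i)--(ii) plus Proposition \ref{SC_G} and Theorem \ref{T_rel}), and R4 is exactly the contrapositive of Lemma \ref{L_Gr}(iii) combined with Corollary \ref{Lemma} and Lemma \ref{BB}. Your setup for the converse (take the minimal sets of the classes as $\mathcal{F}$) is also the paper's. But the heart of the converse --- the exchange axiom --- is not actually proved in your proposal; you describe a contradiction argument minimizing $|X-Y|$, list the cases that arise, and then say that invoking R4 on ``a carefully chosen triple'' combined with R2 ``should collapse the witness chain,'' conceding that the bookkeeping is the delicate step. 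That delicate step is the entire content of the theorem's hard direction, and as sketched it does not close: you never extract from R4 the statement that actually makes the induction work.

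Concretely, the missing idea is the reformulation of R4 as an \emph{augmentation property}: if $A\in\mathcal{F}$ and $(A,A\cup a)\notin R$, then $A\cup a\in\mathcal{F}$ (the paper's Lemma \ref{aug_p}; the proof is a short application of R4 with $X=A-b$ where $b$ witnesses $A\cup a\notin\mathcal{F}$). From this one gets the local characterization (Lemma \ref{claim2}) that a feasible $A\subseteq X$ can be grown inside $X$ exactly when $(A,X)\notin R$, which yields accessibility and feasible chains. The paper's exchange argument then runs a different induction from yours: among counterexamples $A,B$ it maximizes $|C|$ over feasible $C\subseteq A\cap B$, augments $C$ by some $c\in B-C$, grows $C\cup c$ inside $A\cup c$ to a feasible $D$ cospanning with $A$, and contradicts the maximality of $C$ since $C\cup c\subseteq D\cap B$ and $|D|\le|A|<|B|$. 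Without the augmentation property your case ``$z\in Y$'' only hands you another cospanning set, not a feasible one, so the witness chain you hope to collapse need never terminate in $\mathcal{F}$. Finally, the closing claim that identifying the rank closure of $(E,\mathcal{F})$ with $\sigma$ is ``routine'' also leans on Lemma \ref{claim2} (to show a maximal feasible subset of $X$ is cospanning with $X$ and with $\sigma(X)$), so it too depends on the lemma you have not established.
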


 Since we have made some changes in the proof to use the obtained results in the future, we give here all the proof.

\begin{proof}
Necessity follows immediately from Lemma \ref{L_Gr} and  Theorem \ref{T_rel}. To prove the sufficiency, we build
$\mathcal{F}=\{X\subseteq E: \forall Y \subset X: \sigma(X)\neq \sigma(Y) \}$
- the family of minimal sets of equivalence classes, and check that the family is a greedoid.

\begin{lemma}
\label{aug_p}
The property \textbf{R4} is equivalent to the following \textit{augmentation property}:

If $A \in \mathcal{F}$, and  $a \in E-A$ such that
 $(A,A \cup a) \notin R$, then $A \cup a \in \mathcal{F}$.
 \end{lemma}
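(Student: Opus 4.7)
The plan is to show the two implications separately, relying on the equivalent reformulation of membership in $\mathcal{F}$: a set $A$ belongs to $\mathcal{F}$ if and only if $(A-z, A) \notin R$ for every $z \in A$. This reformulation follows from $\mathbf{R2}$; indeed, if some proper subset $Y \subsetneq A$ satisfies $(Y, A) \in R$, then picking any $z \in A - Y$ and applying $\mathbf{R2}$ to the chain $Y \subseteq A - z \subseteq A$ yields $(A-z, A) \in R$, while the converse is immediate from the definition of $\mathcal{F}$.

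For the implication $\mathbf{R4} \Rightarrow$ augmentation, I take $A \in \mathcal{F}$ and $a \in E - A$ with $(A, A \cup a) \notin R$, and assume for contradiction that $A \cup a \notin \mathcal{F}$. The reformulation then produces some $b \in A \cup a$ with $(A \cup a - b, A \cup a) \in R$. Because $A \cup a - a = A$ is not cospanning with $A \cup a$, the element $b$ must lie in $A$. Setting $X = A - b$, $x = b$, and $y = a$ places us in the hypothesis of $\mathbf{R4}$: namely $(X \cup y, X \cup x \cup y) \in R$ while $(X \cup x, X \cup x \cup y) \notin R$. Then $\mathbf{R4}$ produces $z \in X \cup x = A$ with $(A - z, A) \in R$, contradicting $A \in \mathcal{F}$ via the reformulation.

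For the reverse direction, I start from the hypotheses of $\mathbf{R4}$ and aim to produce the desired $z \in X \cup x$; equivalently, I want to show $X \cup x \notin \mathcal{F}$. Suppose for contradiction that $X \cup x \in \mathcal{F}$. Note first that $x \neq y$, since otherwise the two premises of $\mathbf{R4}$ would reduce to $(X \cup x, X \cup x) \in R$ and $(X \cup x, X \cup x) \notin R$ simultaneously. Now setting $A = X \cup x$ and $a = y$, the augmentation property gives $X \cup x \cup y \in \mathcal{F}$. But $X \cup y$ is a proper subset of $X \cup x \cup y$ cospanning with it, contradicting the minimality characterization of membership in $\mathcal{F}$.

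The main obstacle, modest as it is, will be keeping the two characterizations of $\mathcal{F}$ (minimal generator versus $(A-z, A) \notin R$ for all $z \in A$) cleanly aligned, and invoking $\mathbf{R2}$ only where it is genuinely needed to pass from an arbitrary witness $Y \subsetneq A$ to a single-element witness $A - z$; everything else is translation between the variable names used in $\mathbf{R4}$ and those used in the augmentation statement.
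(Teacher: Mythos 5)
Your proof is correct and follows essentially the same route as the paper's: both directions proceed by the same contradiction setup with the same substitutions ($X=A-b$, $x=b$, $y=a$ in one direction and $A=X\cup x$, $a=y$ in the other). The only difference is that you make explicit the single-element-deletion characterization of membership in $\mathcal{F}$ (via \textbf{R2} and transitivity of $R$), which the paper uses implicitly; that is a welcome clarification but not a different argument.
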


 In other words, the augmentation property means that if $A$ is a minimal element of some equivalence class (and so it belongs to $\mathcal{F}$), then for each $a\in E-A$ such that $A \cup a$ does not belong to this equivalence class $[A]$, $A \cup a$ is a minimal element of another equivalence class.

 Let us prove that the augmentation property follows from \textbf{R4}. Suppose there exists $a \in E-A$ such that $(A,A \cup a) \notin R$, but $A \cup a \notin \mathcal{F}$. Hence there is some $b\neq a$ such that $(A\cup a - b,A \cup a) \in R$. Set $X=A-b$, then $(X \cup a,X \cup a \cup b) \in R$,  $(X \cup b,X \cup a \cup b) \notin R$, then $A=X \cup b \notin \mathcal{F}$. Contradiction.

 To prove that \textbf{R4} follows from the augmentation property suppose that $X \cup x \in \mathcal{F}$. Denote $A=X \cup x$. Then $(A,A \cup y) \notin R$, and so $A \cup y \in \mathcal{F}$, in contradiction to $(A \cup y - x, A \cup y) \in R$.

\begin{lemma}
\label{claim2}
Let $A \subseteq X \subseteq E$ and $A \in \mathcal{F}$. Then  there exists $x \in X - A$ such that $A \cup x \in \mathcal{F}$ if and only if $(A,X) \notin R$.
\end{lemma}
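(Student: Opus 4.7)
The plan is to prove both directions separately, relying on the characterization $\mathcal{F}=\{X\subseteq E: \forall Y\subset X, \sigma(X)\neq\sigma(Y)\}$ (so $\mathcal{F}$ is exactly the family of inclusion-minimal elements of the equivalence classes of $R$), together with the augmentation property already derived in Lemma \ref{aug_p}.

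For the forward direction, I would assume $A\cup x\in\mathcal{F}$ for some $x\in X-A$ and argue that $(A,A\cup x)\notin R$: indeed, both $A$ and $A\cup x$ are minimal elements of their equivalence classes under $R$, but $A\subsetneq A\cup x$, so they cannot lie in the same class. Then from $A\subseteq A\cup x\subseteq X$ together with \textbf{R2}, the assumption $(A,X)\in R$ would force $(A,A\cup x)\in R$, a contradiction. Hence $(A,X)\notin R$.

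For the backward direction, I would argue by contradiction: assume $(A,X)\notin R$ but $A\cup a\notin\mathcal{F}$ for every $a\in X-A$. Since $A\in\mathcal{F}$, the contrapositive of Lemma \ref{aug_p} gives $(A,A\cup a)\in R$ for each $a\in X-A$. Now I iterate over $X-A=\{a_1,\dots,a_k\}$: from $(A,A\cup a_1)\in R$ and $(A,A\cup a_2)\in R$, transitivity yields $(A\cup a_1,A\cup a_2)\in R$, and then \textbf{R1} gives $(A\cup a_1, A\cup a_1\cup a_2)\in R$, whence $(A,A\cup a_1\cup a_2)\in R$ by transitivity. Repeating this step, I reach $(A,A\cup(X-A))=(A,X)\in R$, contradicting the hypothesis. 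Therefore some $a\in X-A$ must satisfy $A\cup a\in\mathcal{F}$.

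The only real subtlety is the inductive union step in the second direction; it is routine but must be spelled out because we do not yet have Lemma \ref{un} available as a black box within this proof (we are still verifying that the built family $\mathcal{F}$ is a greedoid). Once the iteration is set up, everything else follows immediately from \textbf{R1}, \textbf{R2}, and Lemma \ref{aug_p}.
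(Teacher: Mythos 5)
Your proposal is correct and follows essentially the same route as the paper: the forward direction via \textbf{R2} together with the fact that members of $\mathcal{F}$ are the inclusion-minimal sets of their classes, and the backward direction via the augmentation property of Lemma \ref{aug_p} followed by combining the relations $(A,A\cup a)\in R$ using \textbf{R1}. Your explicit spelling-out of the union iteration is a reasonable elaboration of the paper's terse ``and so by \textbf{R1}, $(A,X)\in R$,'' but it is not a different argument.
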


Indeed, if $(A,X) \in R$, then convexity \textbf{R2} implies that $(A,A \cup x)\in R$ for each $x \in X - A$, and hence $A \cup x \notin \mathcal{F}$. Conversely, if $A \cup x \notin \mathcal{F}$ for all $x \in X - A$, then from augmentation property follows that $(A,A \cup x)\in R$ for all $x \in X - A$, and so by \textbf{R1}, $(A,X) \in R$.

From definition of $\mathcal{F}$ it follows that $\emptyset \in \mathcal{F}$. Thus Lemma \ref{claim2} implies that $\mathcal{F}$ is accessible. To prove that $\mathcal{F}$ is a greedoid we have to check that for each $A,B \in \mathcal{F}$ with $|A|<|B|$ there exists some $b \in B - A$ such that $A \cup b \in \mathcal{F}$. Assume that the property does not hold. Let $C \subseteq A \cap B$, $C \in \mathcal{F}$, and choose $A,B,C$ such that $|C|$ is maximal. By Lemma \ref{claim2} $(A,A \cup B) \in R$, and so $A\cup B \notin \mathcal{F}$ and $A \nsubseteq B$.

$C \subset B$ and $(C,B) \notin R$, then by Lemma \ref{claim2} there exists a $c \in B - C$ such that $C \cup c \in \mathcal{F}$. By the maximality of $C$ , $c \notin A$. From the assumption, $(A,A \cup c) \in R$. Since $C \cup c \subseteq A \cup c$, Lemma \ref{claim2} implies  that there is a chain $C_{1}=C \cup c \subseteq C_{2}=C_{1}\cup x_{1} \subseteq C_{3}=C_{2}\cup x_{2}\subseteq...\subseteq C_{k}=D$,  where all elements of the chain $C_{i} \in \mathcal{F}$ and $C_{i} \subseteq A \cup c$. The chain ends with $D \in \mathcal{F}$ and $(D,A \cup c) \in R$. Then $(A,D)\in R$, and so $(D,A \cup B) \in R$. Hence $D$ cannot be augmenting from $B$. Note, that $D \subset A\cup c$, and so $|D| \leq |A| < |B|$. Since $C \cup c \subseteq D \cap B$, this contradicts the choice of $A,B,C$.

Hence we proved that the family $\mathcal{F}$ of minimal sets of equivalence classes forms a greedoid. One corollary is that all minimal sets in an equivalence class have the same cardinality.

 It remains to show that the cospanning relation of a greedoid $\mathcal{F}$ is $R$. Let $A$ be a maximal feasible set of $X$, then , by Lemma \ref{claim2}, $(A,X) \in R$, and $(A,\sigma(X)) \in R$, so $(X,\sigma(X)) \in R$ for all $X \subseteq E$. \textbf{R1} implies, that each equivalence class has an unique maximal element. Thus for each maximal element $Z$  we have $Z=\sigma(Z)$.

 If $\sigma(X)=\sigma(Y)$, then $(Y,\sigma(X)) \in R$, and hence $(X,Y) \in R$.  Conversely, let $(X,Y) \in R$, then \textbf{R1} implies that $(X,X \cup Y) \in R$. Let $A$ is a maximal feasible set of $X$. Then $(A,X \cup Y) \in R$ and, by Lemma \ref{claim2}, $A$ is a maximal feasible set of $X \cup Y$. So $X \cup Y \subseteq \sigma(A)=\sigma(X)$. So $Y \subseteq \sigma(X)$. Similarly, $X \subseteq \sigma(Y)$, and by Lemma \ref{L_Gr}, $\sigma(X)=\sigma(Y)$. Then for each $X \in [A]$, $\sigma(X)$ equals to the unique maximal element of $[A]$. Thus the relation $R$ coincides with  cospanning relation w.r.t. operator $\sigma$.
\end{proof}

\begin{theorem}
Let we have some space $(E,\alpha)$. Then $\alpha$ is the closure  operator of a greedoid if and only if  $\alpha$ satisfies the property

\textbf{G1}: $X\subseteq \alpha(X)$ (extensivity),

\textbf{G2}: $(X\subseteq Y\subseteq \alpha(X))\Rightarrow \alpha
(X)=\alpha(Y)$ (self-convexity).

\textbf{G3}: if $\alpha(X \cup y)=\alpha(X\cup x \cup y)$, but $\alpha(X \cup x) \neq \alpha(X\cup x \cup y)$, then there exists a $z \in X \cup x$ such that $\alpha(X \cup x - z)=\alpha(X\cup x)$.
\end{theorem}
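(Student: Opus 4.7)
The plan is to reduce both implications to the cospanning-relation characterization of greedoids already proved as Theorem \ref{T_Gr}, working through the violator-space machinery of Section 2.

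For the forward direction, suppose $\alpha = \sigma$ is the rank closure operator of a greedoid. Then G1 and G2 are just parts (i) and (ii) of Lemma \ref{L_Gr}, combined with Proposition \ref{SC_G} to pass from the weak exchange form \textbf{VV2} to self-convexity \textbf{V2}; and G3 is precisely the equivalent reformulation (iv) of Lemma \ref{L_Gr}(iii) that was derived in the paragraph preceding this theorem. So nothing new has to be checked here.

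For the converse, assume $\alpha$ satisfies G1--G3. By G1 and G2, $(E,\alpha)$ is a violator space, so Theorem \ref{T_rel} says that its cospanning relation $R = \{(X,Y) : \alpha(X) = \alpha(Y)\}$ automatically satisfies \textbf{R1} and \textbf{R2}. The key observation is that G3 is literally \textbf{R4} transcribed into operator language: each equality (resp.\ inequality) $\alpha(A) = \alpha(B)$ corresponds to $(A,B)\in R$ (resp.\ $(A,B)\notin R$), and the roles of $X, x, y, z$ in the two statements match one-for-one. Hence $R$ satisfies \textbf{R1}, \textbf{R2}, \textbf{R4}, and Theorem \ref{T_Gr} produces a greedoid $(E,\mathcal{F})$ whose rank-closure operator $\sigma$ has cospanning relation exactly $R$.

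It remains to identify $\alpha$ with $\sigma$. Both operators induce the same partition of $2^{E}$ (namely the equivalence classes of $R$), and by \textbf{R1} every class has a unique maximal element; from the proof of Theorem \ref{T_Gr}, this maximum is $\sigma(X)$ for any $X$ in that class. On the other hand, applying G2 to the chain $X\subseteq\alpha(X)\subseteq\alpha(X)$ gives $\alpha(\alpha(X))=\alpha(X)$, so $\alpha(X)\in[X]_{R}$; and for every $Y\in[X]_{R}$ extensivity yields $Y\subseteq\alpha(Y)=\alpha(X)$. Thus $\alpha(X)$ is also the unique maximum of $[X]_{R}$, forcing $\alpha=\sigma$. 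The only delicate point is making the translation G3 $\Leftrightarrow$ \textbf{R4} watertight and confirming that $\alpha(X)$ and $\sigma(X)$ are the same maximum element of $[X]_{R}$; both amount to routine unpacking of definitions, so I do not anticipate a serious obstacle.
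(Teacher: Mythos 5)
Your proposal is correct and follows essentially the same route as the paper: necessity via Lemma \ref{L_Gr} and the reformulation (iv) of its part (iii), and sufficiency by passing to the cospanning relation $R$ of $\alpha$, invoking Theorems \ref{T_rel} and \ref{T_Gr}, and then identifying $\alpha$ with the greedoid's operator $\sigma$ using idempotence and extensivity of both. The paper phrases the final identification as two inclusions ($\sigma(X)\subseteq\alpha(\sigma(X))=\alpha(X)$ and $\alpha(X)\subseteq\sigma(\alpha(X))=\sigma(X)$) rather than your "both are the unique maximum of $[X]_R$," but these are the same argument.
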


Hence each violator space $(E,\varphi)$ determines a greedoid if operator $\varphi$ satisfies \textbf{G3}.

\begin{proof}
We have already seen that the properties are necessary. To prove sufficiency define an equivalence relation $R$ w.r.t. $\alpha$. Then $R$ is the cospanning relation of a greedoid $(E,\mathcal{F})$ with the closure operator of the greedoid $\sigma$. Since $X$ and $\sigma(X)$ are cospanning, \textbf{G1} implies $\sigma(X) \subseteq \alpha(\sigma(X))=\alpha(X)$. Since extensivity and self-convexity implies idempotence, $X$ and $\alpha(X)$ are cospanning, so $\alpha(X) \subseteq \sigma(X)$.
\end{proof}

Thus, if we have cospanning relation satisfying the properties \textbf{R1},\textbf{R2} and \textbf{R4} the family $\mathcal{F}$ of minimal sets of equivalence classes forms a greedoid. It is interesting notice, that
the third property \textbf{R4} is not necessary for $\mathcal{F}$ to be a greedoid.
\begin{example}
Let $E=\{1,2,3\}$. Define $\varphi(X)=X$ for each $X\subseteq E$ except
$\varphi(\{1\})=\varphi(\{1,3\})=\{1,3\}$. It is easy to check that the space $(E,\varphi)$ is a
uniquely generated violator space (satisfies both extensivity and self-convexity), where the family of bases $\mathcal{F}=P(\{1,2,3\}) - \{1,3\}$ forms a greedoid. At the same time, the operator $\varphi$ does not satisfies the property \textbf{G3} and the cospanning relation w.r.t. $\varphi$ does not satisfies \textbf{R4}. Indeed,  if $X=\emptyset, x=3,y=1$, then $X \cup x =\{3\} \in \mathcal{F}$, $(X \cup y,X \cup x \cup y) \in R$, and $(X \cup x,X \cup x \cup y) \notin R$.

If we consider the rank function of the greedoid , we can see that $r(\{1\})=r(\{3\})=r(\{1,3\})$, and so
$\sigma(\{1\})=\sigma(\{3\})=\{1,3\}$. Then for this function $\sigma$ the property \textbf{R4} holds and we have a not uniquely generated violator space.
\end{example}

So the same family of bases may be obtain by different operators and by different equivalence relations.

\subsection{Antimatroids - uniquely generated greedoids}

An antimatroid is a greedoid closed under union.

\begin{lemma}\cite{Greedoids}
For a accessible set system $(E,\mathcal{F})$ the following statements are equivalent:

(i)  $(E,\mathcal{F})$ is an antimatroid.

(ii) $\mathcal{F}$ is closed under union.

(iii) $A, A \cup x, A \cup y \in \mathcal{F}$ implies $A \cup \{x,y\} \in \mathcal{F}$

\end{lemma}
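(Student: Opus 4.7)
The plan is to close the cycle $(i)\Leftrightarrow(ii)\Leftrightarrow(iii)$. Two directions are immediate: $(i)\Rightarrow(ii)$ holds by the definition of an antimatroid as a union-closed greedoid, and $(ii)\Rightarrow(iii)$ follows from the identity $(A\cup x)\cup(A\cup y)=A\cup\{x,y\}$, which is a feasible set by union closure.

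For $(ii)\Rightarrow(i)$, given $X,Y\in\mathcal{F}$ with $|X|>|Y|$, union closure gives $Z:=X\cup Y\in\mathcal{F}$. Iterated accessibility of $Z$ yields a chain $\emptyset=Z_0\subset Z_1\subset\cdots\subset Z_n=Z$ with every $Z_i\in\mathcal{F}$. Since $|X|>|Y|$ we have $X\not\subseteq Y$, so some chain element exits $Y$; let $j$ be the smallest index with $Z_j\not\subseteq Y$. Then $Z_{j-1}\subseteq Y$ while $Z_j=Z_{j-1}\cup z$ for some $z\in Z\setminus Y=X\setminus Y$. Union closure now gives $Y\cup Z_j=Y\cup z\in\mathcal{F}$, which is the required augmentation and establishes the greedoid axiom.

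For $(iii)\Rightarrow(ii)$ the central technical step is the \emph{shift lemma}: whenever $A\subseteq B$ both lie in $\mathcal{F}$, $x\notin B$, and $A\cup x\in\mathcal{F}$, then $B\cup x\in\mathcal{F}$. Granted the shift lemma, union closure of $X,Y\in\mathcal{F}$ follows by induction on $|Y\setminus X|$: if $Y\subseteq X$ we are done; otherwise take the accessibility chain of $Y$, let $Y_{j-1}$ be its last element contained in $X$, set $y:=Y_j\setminus Y_{j-1}\in Y\setminus X$, and apply the shift lemma with $A=Y_{j-1}$, $B=X$ to obtain $X\cup y\in\mathcal{F}$; since $|Y\setminus(X\cup y)|=|Y\setminus X|-1$, the inductive hypothesis applied to the pair $(X\cup y,Y)$ delivers $X\cup Y\in\mathcal{F}$.

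The main obstacle I expect is the shift lemma itself. A natural induction on $|B\setminus A|$ picks, by accessibility of $B$, some $b\in B$ with $B-b\in\mathcal{F}$: when $b\notin A$ the inductive hypothesis applied to $A\subseteq B-b$ produces $(B-b)\cup x\in\mathcal{F}$, and $(iii)$ applied with base $B-b$ to the two one-element extensions $b$ and $x$ yields $B\cup x\in\mathcal{F}$. The difficult sub-case is when every accessible element of $B$ lies inside $A$, so $A\not\subseteq B-b$. Here one must strengthen the induction to a lexicographic double induction on $(|B|,|A|)$, invoke accessibility of $A$ to obtain $a\in A$ with $A-a\in\mathcal{F}$, and in the convenient sub-case $a=b$ the inductive hypothesis on $(A-b,B-b)$ combined with $(iii)$ closes the argument. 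The sub-case $a\neq b$ requires further accessibility arguments on $A\cup x$ together with the multi-element form of $(iii)$ (first derived by a preliminary induction on $|S|$: if $A\cup s\in\mathcal{F}$ for every $s\in S$ then $A\cup S\in\mathcal{F}$) to reinstate the premise that the outer induction needs.
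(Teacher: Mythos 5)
The paper states this lemma without proof, citing Korte--Lov\'asz--Schrader, so there is no in-paper argument to compare against; I am judging your proposal on its own merits. Your implications $(i)\Rightarrow(ii)$, $(ii)\Rightarrow(iii)$ and $(ii)\Rightarrow(i)$ are complete and correct --- in particular the augmentation argument via a feasible chain of $X\cup Y$ and the first chain element that leaves $Y$ is exactly right --- and your reduction of $(iii)\Rightarrow(ii)$ to the shift lemma is sound.

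The genuine gap is the shift lemma itself, which carries the entire weight of the one nontrivial implication. Your induction on $|B\setminus A|$ only closes when accessibility of $B$ happens to hand you a removable element $b\notin A$; in the remaining case you write that the argument ``requires further accessibility arguments \dots to reinstate the premise,'' which is not a proof. Even your ``convenient sub-case $a=b$'' does not go through as stated: applying the inductive hypothesis to the pair $(A-b,\,B-b)$ with the element $x$ requires $(A-b)\cup x\in\mathcal{F}$, and nothing you have established gives that (accessibility of $A\cup x$ yields \emph{some} removable element, not necessarily $b$). A clean repair is to prove two statements jointly by induction on $|B|$: (a) \emph{relative accessibility} --- for feasible $A\subset B$ there is $b\in B\setminus A$ with $A\cup b\in\mathcal{F}$ --- and (b) your shift lemma. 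For (a) at size $|B|=n$, take a feasible chain $\emptyset=B_0\subset\cdots\subset B_n=B$, let $j$ be minimal with $B_j\not\subseteq A$ and let $z$ be the element added at step $j$; then apply (b) at size $|A|\le n-1$ to the triple $(B_{j-1},A,z)$ to conclude $A\cup z\in\mathcal{F}$. Iterating (a) produces a feasible chain from $A$ to $B$, and (b) at size $n$ then follows by pushing $x$ up that chain one step at a time using property (iii); no case analysis on where the removable element of $B$ sits is needed. (Your auxiliary multi-element form of (iii) is correct and provable by the induction you indicate, but it is not what closes this gap.)
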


\begin{proposition}
An antimatroid  is a uniquely generated greedoid.
\end{proposition}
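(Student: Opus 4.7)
The plan is to show that the rank closure operator $\sigma$ associated with an antimatroid is a uniquely generated violator operator. Because an antimatroid is by definition a greedoid, Lemma \ref{L_Gr} together with Proposition \ref{SC_G} already tell us that $(E,\sigma)$ is a violator space, so what remains is unique generation. By Proposition \ref{UQP}, it is enough to show that every set $X \subseteq E$ admits a unique basis (minimal generator) with respect to $\sigma$.

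The key reduction is Lemma \ref{BB}: the bases of $X$ in the violator-space sense are precisely the feasible sets $F \in \mathcal{F}$ satisfying $\sigma(F) = \sigma(X)$. So the goal becomes: if $F_1, F_2 \in \mathcal{F}$ have $\sigma(F_1) = \sigma(F_2) = \sigma(X)$, then $F_1 = F_2$. To prove this, I would exploit the defining feature of antimatroids, namely that $\mathcal{F}$ is closed under union. This immediately gives $F_1 \cup F_2 \in \mathcal{F}$. Then the union property stated in Lemma \ref{un} for violator spaces yields $\sigma(F_1 \cup F_2) = \sigma(F_1) = \sigma(F_2)$, so $F_1 \cup F_2$ is itself a feasible generator of $\sigma(X)$.

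To finish, I apply Lemma \ref{BB} to the feasible set $F_1 \cup F_2$: being feasible means it is a minimal generator of its own closure, so no proper subset of $F_1 \cup F_2$ can have the same $\sigma$-value. Since $F_1 \subseteq F_1 \cup F_2$ and $\sigma(F_1) = \sigma(F_1 \cup F_2)$, minimality forces $F_1 = F_1 \cup F_2$; by symmetry $F_2 = F_1 \cup F_2$, so $F_1 = F_2$, which establishes the uniqueness.

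I do not anticipate a real obstacle. The argument is essentially a three-line synthesis of (i) the antimatroid axiom of union-closure of $\mathcal{F}$, (ii) the union property of violator spaces (Lemma \ref{un}), and (iii) the self-characterization of feasible sets as minimal generators (Lemma \ref{BB}). The only point that needs slight care is the transfer between the greedoid notion of basis (a maximal feasible subset of $X$) and the violator notion of basis (an inclusion-minimal set with $\sigma(B) = \sigma(X)$), but Lemma \ref{BB} makes this identification explicit and we never need the subset-of-$X$ clause anywhere in the argument above.
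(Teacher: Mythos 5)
Your proposal is correct and follows essentially the same route as the paper's own proof: both take two bases with equal $\sigma$-value, use union-closure of $\mathcal{F}$ to get a feasible union, apply Lemma \ref{un} to see the union is cospanning with each, and derive a contradiction with minimality of generators via Lemma \ref{BB}. Your write-up merely makes the final minimality step more explicit than the paper's terse ``Contradiction.''
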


\begin{proof}
Let $\mathcal{F}$  be an antimatroid. Since each antimatroid is a greedoid, it remains to prove that the greedoid is uniquely generated. Suppose there are two bases $B_{1}$ and $B_{2}$ such that $\sigma(B_{1}) = \sigma(B_{2})$. Since $\mathcal{F}$ is a family of bases, then $B_{1},B_{2} \in \mathcal{F}$. Hence $B_{1} \cup B_{2} \in \mathcal{F}$, because $\mathcal{F}$ is an antimatroid. But $\sigma(B_{1} \cup B_{2}) = \sigma(B_{1})$ (see Lemma \ref{un}). Contradiction.
\end{proof}

Since for each greedoids the operator $\sigma$ is a violator operator, Theorem \ref{TH} implies
\begin{corollary}
The operator $\sigma$ of each antimatroid satisfies the anti-exchange property.
\end{corollary}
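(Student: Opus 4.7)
The plan is to string together three facts already established in the paper. First, for any greedoid $(E,\mathcal{F})$ the rank closure operator $\sigma$ is a violator operator: Lemma \ref{L_Gr}(i) gives extensivity, and Lemma \ref{L_Gr}(ii) is exactly condition \textbf{VV2}, which by Proposition \ref{SC_G} is equivalent to self-convexity \textbf{V2}. Hence $(E,\sigma)$ fits the definition of a violator space.

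Next, I would invoke the proposition just proved to say that an antimatroid is a uniquely generated greedoid, i.e., $(E,\sigma)$ is a uniquely generated violator space. At this point Theorem \ref{TH} applies directly and yields that $\sigma$ enjoys the anti-exchange property
\[
p,q\notin\sigma(X)\wedge p\in\sigma(X\cup q)\Rightarrow q\notin\sigma(X\cup p),
\]
which is the desired conclusion.

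There is really no obstacle here: the argument is a one-line composition, and the only thing worth emphasizing in the write-up is the identification of $\sigma$ as a violator operator, since the anti-exchange characterization in Theorem \ref{TH} was stated for violator spaces rather than for greedoid closure operators specifically. Everything else is a direct quotation of the previous proposition and Theorem \ref{TH}.
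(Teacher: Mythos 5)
Your argument is correct and is essentially the paper's own: the paper likewise observes that $\sigma$ of a greedoid is a violator operator (via Lemma \ref{L_Gr} and Proposition \ref{SC_G}), combines this with the preceding proposition that an antimatroid is a uniquely generated greedoid, and then applies Theorem \ref{TH}. Your write-up merely makes explicit the identification of $\sigma$ as a violator operator, which the paper had already established in Section 4.1.
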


Thus we can conclude with the following theorem.

\begin{theorem}
\label{UG_A}
The family $\mathcal{F}$ is an antimatroid if and only if $\mathcal{F}$ is a uniquely generated greedoid.
\end{theorem}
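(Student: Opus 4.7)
The forward direction (antimatroid implies uniquely generated greedoid) is already the preceding Proposition, so my plan focuses entirely on the converse. Assume $(E,\mathcal{F})$ is a uniquely generated greedoid with rank closure operator $\sigma$ and rank function $r$. Since every greedoid is a violator space (Lemma \ref{L_Gr} combined with Proposition \ref{SC_G}), Theorem \ref{TH} guarantees that $\sigma$ satisfies the anti-exchange property. To conclude that $\mathcal{F}$ is an antimatroid I would invoke the equivalence $(ii)\Leftrightarrow(iii)$ in the lemma characterizing antimatroids: it suffices to prove that $A,A\cup x,A\cup y\in\mathcal{F}$ implies $A\cup\{x,y\}\in\mathcal{F}$.

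Fix such $A,A\cup x,A\cup y\in\mathcal{F}$ with $x\neq y$ and $x,y\notin A$. First I would show $x,y\notin\sigma(A)$: if $\sigma(A)=\sigma(A\cup x)$, then by Lemma \ref{BB} both $A$ and $A\cup x$ are bases of the same closure, contradicting uniqueness of the basis; similarly for $y$. Next, I argue by contradiction, supposing $A\cup\{x,y\}\notin\mathcal{F}$. Since $A\cup x$ is a feasible subset of $A\cup\{x,y\}$ of cardinality $|A|+1$, we have $r(A\cup\{x,y\})\geq|A|+1$; on the other hand $A\cup\{x,y\}\notin\mathcal{F}$ and $|A\cup\{x,y\}|=|A|+2$ force $r(A\cup\{x,y\})<|A|+2$. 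Hence
\[
r(A\cup\{x,y\})=|A|+1=r(A\cup x)=r(A\cup y),
\]
which by the definition $\sigma(X)=\{z:r(X\cup z)=r(X)\}$ yields \emph{simultaneously} $y\in\sigma(A\cup x)$ and $x\in\sigma(A\cup y)$.

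Combined with $x,y\notin\sigma(A)$, this contradicts the anti-exchange property applied with $X=A$, $p=x$, $q=y$. Therefore $A\cup\{x,y\}\in\mathcal{F}$, so $\mathcal{F}$ is closed under union and, being already accessible, is an antimatroid.

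The subtle step I expect to be the main obstacle is extracting \emph{both} memberships $y\in\sigma(A\cup x)$ and $x\in\sigma(A\cup y)$ from the single failure $A\cup\{x,y\}\notin\mathcal{F}$. A purely $\sigma$-based case analysis (on which element of $A\cup\{x,y\}$ fails to be extreme) only produces one of the two memberships at a time, and sub-cases with a non-extreme element $a\in A$ do not contradict anti-exchange by themselves. Routing the argument through the rank identity $r(A\cup x)=r(A\cup y)=r(A\cup\{x,y\})=|A|+1$ is what converts the failure into a \emph{symmetric} pair of memberships, which is exactly the configuration anti-exchange forbids.
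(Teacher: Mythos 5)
Your proof is correct and follows essentially the same route as the paper's: assume $A,\,A\cup x,\,A\cup y\in\mathcal{F}$ but $A\cup\{x,y\}\notin\mathcal{F}$, derive $x,y\notin\sigma(A)$, $x\in\sigma(A\cup y)$ and $y\in\sigma(A\cup x)$, and contradict the anti-exchange property (available via Theorem \ref{TH} because the greedoid is uniquely generated). The only difference is that you justify, through the rank identity $r(A\cup x)=r(A\cup y)=r(A\cup\{x,y\})=|A|+1$, the three membership claims that the paper simply asserts, which is a welcome filling-in of detail rather than a different approach.
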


\begin{proof}
It remains to prove that each uniquely generated greedoid is an antimatroid. Suppose,
$A, A \cup x, A \cup y \in \mathcal{F}$, but $A \cup \{x,y\} \notin \mathcal{F}$. Then $x,y \notin \sigma(A)$, $x \in \sigma(A\cup y)$ and $y \in \sigma(A\cup x)$. Contradiction to anti-exchange property.
\end{proof}

\begin{remark}
Antimatroids being greedoids satisfy the weaker version of exchange property (iii) ( Lemma \ref{L_Gr}) and at the same time they satisfy the anti-exchange property. But that is not a problem (paradox), since the conditions of the property (iii) ($X \cup x \in \mathcal{F}$, and $x \in \sigma(X\cup y)$) do not hold for antimatroids.
\end{remark}

Since any antimatroid is an uniquely generated greedoid, the cospanning relation w.r.t. the closure operator of an antimatroid, or, for simplicity, the cospanning relation of an antimatroid, determines the hypercube partition $\mathcal{P}$ with augmentation property (see Lemma \ref{aug_p}):
\begin{equation}
\forall [A,B]\in \mathcal{P} \text{ and } x \notin B \text{ there is }C \subseteq E \text{ such that } [A \cup x,C] \in \mathcal{P}\label{Eq_AN}%
\end{equation}

Let we have some hypercube partition $\mathcal{P^{'}}$ with augmentation property (\ref{Eq_AN}). Denote $\mathcal{F}=\{A:[A,B]\in \mathcal{P^{'}}\}$.

\begin{theorem}
Every hypercube partition $\mathcal{P^{'}}$ satisfying (\ref{Eq_AN}) is the partition  of $H(E)$ into equivalence classes w.r.t. the closure operator of an antimatroid $\mathcal{F}$.
\end{theorem}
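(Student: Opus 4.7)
The plan is to use Clarkson's correspondence between hypercube partitions and uniquely generated violator spaces, verify that the resulting violator operator satisfies the greedoid closure axiom \textbf{G3} via (\ref{Eq_AN}), and then invoke Theorem~\ref{UG_A} to upgrade the associated greedoid to an antimatroid.

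First I would define $\varphi : 2^{E} \to 2^{E}$ by $\varphi(X) = B$ whenever $X$ lies in the unique interval $[A,B] \in \mathcal{P}'$ containing it. By the theorem attributed to Clarkson (following Lemma~\ref{un}), $(E,\varphi)$ is then a uniquely generated violator space whose cospanning partition is precisely $\mathcal{P}'$, and the minimum of each equivalence class $[A,B]$ equals $ex(X)$ for every $X$ in that class. In particular $\mathcal{F}$ coincides with the family of bases of $(E,\varphi)$, which by Lemma~\ref{BB} is the candidate family of feasible sets for a greedoid attached to $\varphi$.

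The heart of the argument is to show that $\varphi$ satisfies \textbf{G3}. Fix $x,y \notin X$ with $\varphi(X \cup y) = \varphi(X \cup x \cup y)$ and $\varphi(X \cup x) \neq \varphi(X \cup x \cup y)$, and suppose for contradiction that $X \cup x \in \mathcal{F}$; write $[X \cup x,\,D]$ for its interval, so $D = \varphi(X \cup x)$. If $y \in D$, then $X \cup x \subseteq X \cup x \cup y \subseteq D$, and self-convexity \textbf{G2} yields $\varphi(X \cup x \cup y) = D = \varphi(X \cup x)$, contradicting the hypothesis. If $y \notin D$, the augmentation property (\ref{Eq_AN}) applied to $[X \cup x, D]$ with element $y$ produces some $C$ with $[X \cup x \cup y, C] \in \mathcal{P}'$; but then every set cospanning with $X \cup x \cup y$ must contain $x$, while $X \cup y$ does not, again a contradiction. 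Since extensivity \textbf{G1} and self-convexity \textbf{G2} hold for any violator operator, the theorem characterising greedoid closure operators lets me conclude that $\varphi$ is the closure operator of a greedoid whose feasible sets, by Lemma~\ref{BB}, are precisely $\mathcal{F}$.

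Finally, $(E,\varphi)$ being uniquely generated, Theorem~\ref{UG_A} promotes this greedoid to an antimatroid, and its closure operator $\sigma$ satisfies $\sigma(X) = \varphi(X)$ (the unique maximum of the interval of $X$), so the cospanning partition of $\sigma$ is $\mathcal{P}'$ by construction. I expect the main obstacle to be the $y \notin D$ branch of \textbf{G3}, where (\ref{Eq_AN}) is used not directly to enlarge a set already in $\mathcal{F}$, but through the derived fact that membership in $\mathcal{F}$ forces a set to be the minimum of its own interval; this inversion of the augmentation property is the point where the hypercube structure of $\mathcal{P}'$ really matters.
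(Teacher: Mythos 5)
Your proof is correct and follows essentially the same route as the paper, which likewise notes that a hypercube partition with property (\ref{Eq_AN}) determines a uniquely generated greedoid (via Theorem \ref{T_Gr}) and then invokes Theorem \ref{UG_A} to conclude it is an antimatroid. Your explicit verification of \textbf{G3}, including the case split on whether $y$ lies in $\varphi(X\cup x)$, amounts to re-deriving the ``augmentation implies \textbf{R4}'' direction of Lemma \ref{aug_p}, which the paper cites rather than reproving at this point.
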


The equivalent statement looks as following.
\begin{theorem}
Let $E$ be a finite set and $R \subseteq 2^{E} \times 2^{E}$ an equivalence relation on $2^{E}$. Then $R$ is the cospanning relation of an antimatroid if and only if the following conditions hold for every $X,Y,Z \subseteq E$ and $x,y \notin X$:

\textbf{R1}: if $(X,Y) \in R$, then $(X,X \cup Y) \in R$

\textbf{R2}: if $X \subseteq Y \subseteq Z$ and $(X,Z) \in R$, then $(X,Y) \in R$.

\textbf{R3}: if $(X,Y) \in R$, then $(X,X \cap Y) \in R$

\textbf{R4}: if $(X \cup y,X \cup x \cup y) \in R$, but $(X \cup x,X \cup x \cup y) \notin R$, then there exists an element $z \in X \cup x$ such that $(X \cup x - z,X \cup x) \in R$.

\end{theorem}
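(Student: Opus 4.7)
The plan is to chain together three earlier results: Theorem \ref{T_Gr} (greedoid characterization via R1, R2, R4), Proposition \ref{UQP} (unique generation equivalent to the intersection property), and Theorem \ref{UG_A} (antimatroid equals uniquely generated greedoid). Since R3 is exactly the cospanning translation of property (\ref{UQ}), adding R3 to R1, R2, R4 should pick out the antimatroids among the greedoids.

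For necessity, I will suppose $R$ is the cospanning relation of an antimatroid $\mathcal{F}$ with rank closure operator $\sigma$. Since every antimatroid is a greedoid, Theorem \ref{T_Gr} immediately yields R1, R2, R4. By Theorem \ref{UG_A}, $\mathcal{F}$ is a uniquely generated greedoid, and (viewing $\sigma$ as a violator operator through Lemma \ref{L_Gr}(i)--(ii) and Proposition \ref{SC_G}) the violator space $(E,\sigma)$ is uniquely generated. Proposition \ref{UQP} then gives $\sigma(X)=\sigma(Y)\Rightarrow \sigma(X\cap Y)=\sigma(X)$, which, read on pairs, is exactly R3.

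For sufficiency, I will start with $R$ satisfying R1, R2, R3, R4. Since R1, R2, R4 already hold, Theorem \ref{T_Gr} supplies a greedoid $\mathcal{F}$ whose cospanning relation with respect to its closure operator $\sigma$ coincides with $R$. Lemma \ref{L_Gr}(i)--(ii) and Proposition \ref{SC_G} ensure that $\sigma$ is a violator operator. Property R3 now translates to $\sigma(X)=\sigma(Y)\Rightarrow \sigma(X\cap Y)=\sigma(X)$, so by Proposition \ref{UQP} the violator space $(E,\sigma)$ is uniquely generated, i.e.\ $\mathcal{F}$ is a uniquely generated greedoid. Theorem \ref{UG_A} then concludes that $\mathcal{F}$ is an antimatroid.

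The main obstacle is essentially bookkeeping rather than new combinatorics: one must be careful that R3 on the abstract relation $R$ transports, via the equality of $R$ with the cospanning relation of the constructed $\sigma$ established inside the proof of Theorem \ref{T_Gr}, to the concrete intersection property (\ref{UQ}) of $\sigma$ required by Proposition \ref{UQP}. Once this identification is made explicit, the remainder of the argument is a purely deductive chaining of results already proved in the paper.
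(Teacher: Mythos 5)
Your proof is correct, and it is essentially the short route that the paper itself gestures at in the sentence just before its proof ("the theorem immediately follows from Theorem \ref{UG_A}"), but it is not the proof the paper actually writes out. The paper's displayed argument is a direct combinatorial one: it takes \textbf{R1}, \textbf{R2}, \textbf{R4} to get a greedoid $\mathcal{F}$ via Theorem \ref{T_Gr}, uses \textbf{R3} only through the fact that each equivalence class then has a unique minimal element (a unique basis), and shows by hand that $\mathcal{F}$ is closed under union: given $A,B\in\mathcal{F}$ it builds feasible chains from $A$ and from $B$ up to the basis of $[A\cup B]_R$ using Lemma \ref{claim2}, and uniqueness of that basis forces both chains to terminate at $A\cup B$ itself. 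Your version instead routes everything through the violator-space machinery: \textbf{R3} is read as the intersection property (\ref{UQ}), Proposition \ref{UQP} converts it to unique generation of $(E,\sigma)$, and Theorem \ref{UG_A} (whose proof goes through the anti-exchange property via Theorem \ref{TH}) finishes. Both are valid; yours is more modular and shorter, while the paper's direct proof is deliberately chosen to exhibit concretely "how the uniqueness of a basis turns a greedoid into an antimatroid" without passing through anti-exchange. You are also right to flag the one genuine bookkeeping point, namely that \textbf{R3} for the abstract relation $R$ transfers to the constructed operator $\sigma$ only because the proof of Theorem \ref{T_Gr} establishes that $R$ equals the cospanning relation of $\sigma$; with that made explicit your argument is complete.
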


Since hypercube partition satisfying (\ref{Eq_AN}) determines the uniquely generated greedoid (Theorem \ref{T_Gr}), the theorem immediately follows from Theorem \ref{UG_A}.

In any way, it may be interesting to see how the uniqueness  of a basis turns a greedoid to be an antimatroid.

\begin{proof}
Necessity follows immediately from Theorems \ref{T_Gr} and  \ref{UG_A}. Since properties \textbf{R1},\textbf{R2},\textbf{R4} imply that the family $\mathcal{F}$ is a greedoid (Theorem \ref{T_Gr}) to prove the sufficiency it remains to prove that $\mathcal{F}$ is closed under union.

Let $A,B \in \mathcal{F}$. $(A,B) \notin R$, since there is an unique basis. Then, w.l.o.g., $(B,A \cup B) \notin R)$. Lemma \ref{claim2} implies that there is a chain $B=B_{0}\subset B_{1}\subset...\subset B_{k}=C$ such that
$B_{i}=B_{i-1}\cup x_{i}$, $B_{i}\in\mathcal{F}$ for $0\leq i\leq k$, and $(C, A\cup B) \in R$.

If $(A, A\cup B) \in R$, then $A=C$ (from uniqueness of the basis), and so $A\cup B = A \supseteq B$.
If $(A, A\cup B) \notin R$, then there exists a chain $A=A_{0}\subset A_{1}\subset...\subset A_{m}=D$,
such that $A_{i}\in\mathcal{F}$ for $0\leq i\leq m$, and $(D, A\cup B) \in R$. Hence, $C=D=A\cup B$. So
$\mathcal{F}$ is closed under union.
\end{proof}

\subsection{Matroids - hereditary greedoids}

A matroid $(E,\mathcal{F})$ may be defined as a greedoid  which satisfy hereditary property:

for each $Y \subseteq X \subseteq E$, if $X \in \mathcal{F}$, then $Y \in \mathcal{F}$.

\begin{theorem}
Let $E$ be a finite set and $R \subseteq 2^{E} \times 2^{E}$ an equivalence relation on $2^{E}$. Then $R$ is the cospanning relation of a matroid if and only if the following conditions hold for every $X,Y,Z \subseteq E$ and $x,y \notin X$:

\textbf{R1}: if $(X,Y) \in R$, then $(X,X \cup Y) \in R$

\textbf{R2}: if $X \subseteq Y \subseteq Z$ and $(X,Z) \in R$, then $(X,Y) \in R$.

\textbf{R4}: if $(X \cup y,X \cup x \cup y) \in R$, but $(X \cup x,X \cup x \cup y) \notin R$, then there exists an element $z \in X \cup x$ such that $(X \cup x - z,X \cup x) \in R$.

\textbf{R5}: if  $(X,X - x) \notin R$ for each $x \in X$,  then  $(X - x - z,X - x) \notin R$ for each $z \in X - x$.

\end{theorem}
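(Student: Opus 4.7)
The plan is to combine Theorem \ref{T_Gr} (which handles the greedoid part from \textbf{R1}, \textbf{R2}, \textbf{R4}) with a careful reading of \textbf{R5} as an encoding of the hereditary property. The key translation is this: by Corollary \ref{Lemma}, the hypothesis ``$(X,X-x)\notin R$ for every $x\in X$'' in \textbf{R5} is equivalent to saying every $x\in X$ is an extreme point of $X$, i.e., $X=ex(X)$. Once $R$ is the cospanning relation of a greedoid, the paper's characterization $\mathcal{F}=\{X\subseteq E:X=ex(X)\}$ shows that this hypothesis is equivalent to $X\in\mathcal{F}$, and similarly the conclusion of \textbf{R5} is equivalent to $X-x\in\mathcal{F}$. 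So \textbf{R5} is precisely the statement: $X\in\mathcal{F}\Rightarrow X-x\in\mathcal{F}$ for each $x\in X$, which by iteration is the hereditary property.

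For necessity, if $(E,\mathcal{F})$ is a matroid then it is a greedoid, so \textbf{R1}, \textbf{R2}, \textbf{R4} follow from Theorem \ref{T_Gr}. For \textbf{R5}, assume $(X,X-x)\notin R$ for each $x\in X$; by the translation above, $X=ex(X)$, hence $X\in\mathcal{F}$. By hereditariness, $X-x\in\mathcal{F}$, so $X-x=ex(X-x)$, which unwinds to $(X-x-z,X-x)\notin R$ for every $z\in X-x$.

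For sufficiency, Theorem \ref{T_Gr} applied to \textbf{R1}, \textbf{R2}, \textbf{R4} yields a greedoid $(E,\mathcal{F})$ whose cospanning relation is $R$. It remains to verify hereditariness. Fix $X\in\mathcal{F}$ and $x\in X$. By Lemma \ref{BB}, $\sigma(Y)\neq\sigma(X)$ for every $Y\subsetneq X$; in particular $(X,X-x')\notin R$ for every $x'\in X$, which is precisely the hypothesis of \textbf{R5}. Invoking \textbf{R5} gives $(X-x-z,X-x)\notin R$ for every $z\in X-x$. Via Corollary \ref{Lemma} this means every $z\in X-x$ satisfies $z\notin\sigma((X-x)-z)$, so $X-x=ex(X-x)$, and by the greedoid identity $\mathcal{F}=\{Y:Y=ex(Y)\}$ we conclude $X-x\in\mathcal{F}$. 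Iterating this one-step deletion down to $\emptyset$ yields full hereditariness, so $(E,\mathcal{F})$ is a matroid.

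The main obstacle, modest but worth flagging, is the equivalence between the purely relational statement of \textbf{R5} and the feasibility condition $X\in\mathcal{F}$; this rests on two separate pieces (Corollary \ref{Lemma} and the extreme-point characterization of feasible sets in a greedoid), and must be invoked twice, once on $X$ and once on $X-x$. A second small point is that \textbf{R5} is formulated as a single-deletion step, so one must note that hereditariness in its full strength is obtained by iterating this step, which is legitimate because each deleted set again lies in $\mathcal{F}$ and hence again satisfies the hypothesis of \textbf{R5}.
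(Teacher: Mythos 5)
Your proof is correct, and since the paper states this theorem without any proof, your argument supplies exactly the intended route: reduce \textbf{R1}, \textbf{R2}, \textbf{R4} to Theorem \ref{T_Gr}, and translate \textbf{R5} via Corollary \ref{Lemma} and the identity $\mathcal{F}=\{X:X=ex(X)\}$ into the one-step hereditary property, iterated. Both directions check out, and you correctly flag the only two delicate points (the double use of the translation, on $X$ and on $X-x$, and the legitimacy of iterating the single-deletion step).
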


\subsection{Antimatroids and convex geometries}

We know that  $(E,\mathcal{F})$ is an antimatroid, if and only if  $(E,\mathcal{N})$ is a convex geometry, where $ \mathcal{N}=\{E-X: X \in \mathcal{F}\}$. What is a connection between their partitions to cospanning equivalence classes?

First find the formula for bases in antimatroids.
\begin{lemma}
Let $(E,\mathcal{F})$ be an antimatroid. Then $B_{X}$ is a basis of $X \subseteq E$ if and only if
\begin{equation}
\label{basis}
B_{X}=
{\displaystyle\bigcup}
\{A\in \mathcal{F}:A\subseteq X\}.
\end{equation}
\end{lemma}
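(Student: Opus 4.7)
My plan is to exploit union-closure of antimatroids together with uniqueness of bases. Set $U := \bigcup\{A \in \mathcal{F} : A \subseteq X\}$. The whole proof reduces to showing that $U$ is the unique maximal feasible subset of $X$ and then invoking Lemma \ref{BB}, which equates the ``maximal feasible subset'' and ``minimal generator'' notions of basis, together with the uniqueness granted by Theorem \ref{UG_A}.

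First I would observe that the family $\{A \in \mathcal{F} : A \subseteq X\}$ is nonempty, since $\emptyset \in \mathcal{F}$ by the greedoid axioms and $\emptyset \subseteq X$. Because $(E,\mathcal{F})$ is an antimatroid, $\mathcal{F}$ is closed under union (by the characterization lemma preceding Theorem \ref{UG_A}), hence $U \in \mathcal{F}$. Trivially $U \subseteq X$, and by construction $U$ contains every feasible subset of $X$. Therefore $U$ is itself a feasible subset of $X$ that contains every other feasible subset of $X$, i.e., $U$ is \emph{the} (unique) maximal feasible subset of $X$.

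For the forward implication, let $B_X$ be any basis of $X$. By Lemma \ref{BB} and the Remark immediately following it, $B_X$ is a maximal feasible subset of $X$. Since antimatroids are uniquely generated greedoids (Theorem \ref{UG_A}), the maximal feasible subset of $X$ is unique, so $B_X = U$. For the reverse implication, if $B_X = U$, then by the previous paragraph $B_X$ is a maximal feasible subset of $X$, and the same equivalence from Lemma \ref{BB} certifies $B_X$ as a basis of $X$.

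I do not expect a serious obstacle: the proof is essentially union-closure plus the already-established equivalence of the two basis notions. The only delicate point is to flag that $U \in \mathcal{F}$ depends specifically on the antimatroid axiom (closure under union); for a general greedoid $U$ need not be feasible, which is exactly why the formula (\ref{basis}) would fail outside the antimatroid setting.
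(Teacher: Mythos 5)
Your proposal is correct and follows essentially the same route as the paper: both arguments use closure under union to show that $U=\bigcup\{A\in\mathcal{F}:A\subseteq X\}$ is the unique maximal feasible subset of $X$, and then identify bases with maximal feasible subsets via Lemma \ref{BB} and the remark following it. The only cosmetic difference is that your appeal to Theorem \ref{UG_A} for uniqueness is superfluous, since uniqueness of the maximal feasible subset already follows from $U$ containing every feasible subset of $X$.
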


\begin{proof}
Let $B_{X}$ be a basis of $X$, and so $B_{X} \in \mathcal{F}$. Then if $A \in \mathcal{F}$ and $A\subseteq X$, then $A \subseteq B_{X}$. Indeed, $A \cup B_{X} \subseteq X$ and $A \cup B_{X} \in \mathcal{F}$, so $B_{X}=A \cup B_{X}$, that proves $A \subseteq B_{X}$. Then (\ref{basis}) is proved.

If $B$ is defined by (\ref{basis}), then $B \in \mathcal{F}$, $B \subseteq X$ and $ B_{X} \subseteq B$. Then $B=B_{X}$.
\end{proof}

Since the section deals with equivalence classes of antimatroids and convex geometries, to distinguish them from each other we denote them by $[X]_{\sigma}$ and by $[X]_{\tau}$ correspondingly.

It is easy to see that the maximal element in $[\overline{X}]_{\tau}$ is a compliment of the minimal element in $[X]_{\sigma}$, i.e., $\tau(\overline{X})=\overline{B_{X}}$. Indeed, (\ref{basis}) implies
$\overline{B_{X}}=\bigcap\{\overline{A}\in \mathcal{N}:\overline{X} \subseteq \overline{A}\}$ and from (\ref{tau}) it follows that  $\overline{B_{X}}=\tau(\overline{X})$.

Taking into account that in  antimatroids the unique basis of each set is a set of extreme point we obtain $\overline{ex_{\sigma}(X)}=\tau(\overline{X})$.

\begin{lemma}
Let $(E,\mathcal{F})$ be an antimatroid. Then $\overline{\sigma(X)} = ex_{\tau}(\overline{X})$.
\end{lemma}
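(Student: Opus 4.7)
The plan is to unfold both sides into explicit conditions on elements $y \in E$ and then reduce the identity to the preceding lemma $\overline{ex_{\sigma}(A)} = \tau(\overline{A})$ applied at a well-chosen set $A$.

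First I would rewrite the left-hand side. Since $\sigma(X) = E - \Gamma(X)$ with $\Gamma(X) = \{x \in E - X : X \cup x \in \mathcal{F}\}$, we have $y \in \overline{\sigma(X)}$ if and only if $y \notin X$ and $y \notin \sigma(X)$. In particular, $\overline{\sigma(X)} \subseteq \overline{X}$ by extensivity of $\sigma$, so the claim amounts to identifying $ex_{\tau}(\overline{X})$ with the elements of $\overline{X}$ not belonging to $\sigma(X)$.

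Next I would expand the right-hand side using the definition of an extreme point: $y \in ex_{\tau}(\overline{X})$ iff $y \in \overline{X}$ and $y \notin \tau(\overline{X} - y)$. The key move is to rewrite $\overline{X} - y = \overline{X \cup y}$ (valid because $y \notin X$) and apply the previous lemma with $A = X \cup y$, which yields
\[
\tau(\overline{X \cup y}) = \overline{ex_{\sigma}(X \cup y)} = E - ex_{\sigma}(X \cup y).
\]
Hence $y \notin \tau(\overline{X} - y)$ is equivalent to $y \in ex_{\sigma}(X \cup y)$.

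Finally I would observe that $y \in ex_{\sigma}(X \cup y)$ means exactly $y \notin \sigma((X \cup y) - y) = \sigma(X)$. Combining the equivalences, $y \in ex_{\tau}(\overline{X})$ iff $y \in \overline{X}$ and $y \notin \sigma(X)$, iff $y \in \overline{\sigma(X)}$. This gives the desired set equality. There is no genuine obstacle here; the only thing to keep track of is that the previous lemma is applied to the set $X \cup y$ (not $X$), so that its conclusion produces the complement of $\tau$ evaluated at $\overline{X} - y$, which is precisely the set appearing in the definition of $ex_{\tau}(\overline{X})$.
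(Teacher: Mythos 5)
Your proof is correct, and it takes a genuinely different route from the paper's. The paper fixes the set $X$, identifies $ex_{\tau}(\overline{X})$ with $ex_{\tau}(\tau(\overline{X}))$, and then runs a chain through the accessibility property $x\in ex(A)\Leftrightarrow A-x\in K$ of the convex geometry and the augmentation property of the antimatroid (Lemma \ref{aug_p}): $a\in ex_{\tau}(\overline{X})\Leftrightarrow \tau(\overline{X})-a\in\mathcal{N}\Leftrightarrow ex_{\sigma}(X)\cup a\in\mathcal{F}\Leftrightarrow a\notin\sigma(X)$. You instead apply the complementation identity $\tau(\overline{A})=\overline{ex_{\sigma}(A)}$ at the shifted argument $A=X\cup y$, exploiting that $\overline{X}-y=\overline{X\cup y}$, after which everything reduces to the definitions of $ex_{\tau}$ and $ex_{\sigma}$; no appeal to feasibility, accessibility, or augmentation is needed. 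Your version is more elementary and makes transparent that the two identities $\overline{ex_{\sigma}(X)}=\tau(\overline{X})$ and $\overline{\sigma(X)}=ex_{\tau}(\overline{X})$ are essentially dual consequences of one another under the substitution $X\mapsto X\cup y$; the paper's version has the advantage of exhibiting the combinatorial mechanism (which one-element extensions of the basis $ex_{\sigma}(X)$ remain feasible) behind the equality. Both arguments handle the trivial case $y\in X$ correctly, since both sides of the claimed equality are contained in $\overline{X}$.
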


\begin{proof}
Consider an equivalence class $[X]_{\sigma}$. Lemma \ref{aug_p} implies that for each $a \in E-ex_{\sigma}(X)$ the following holds: $a \notin \sigma(X)$ if and only if $(ex_{\sigma}(X) \cup a) \in \mathcal{F}$. Note, that $ex_{\tau}(\overline{X}) \subseteq \tau(\overline{X})=\overline{ex_{\sigma}(X)}$, and so if $a \in ex_{\tau}(\overline{X})$ then $a \in E-ex_{\sigma}(X)$.
Based on (\ref{Augm_P}) we have

$a \in ex_{\tau}(\overline{X})\Leftrightarrow \tau(\overline{X})- a \in \mathcal{N} \Leftrightarrow \overline{ex_{\sigma}(X)}- a \in \mathcal{N} \Leftrightarrow (ex_{\sigma}(X) \cup a) \in \mathcal{F} \Leftrightarrow a \notin \sigma(X)$.
\end{proof}

So Proposition \ref{co-co} immediately implies
\begin{proposition}
There is a one-to-one correspondence between an equivalence class of $X$ of cospanning relation w.r.t. an antimatroid - $[X]_{\sigma}$ and an equivalence class $[\overline{X}]_{\tau}$ in the convex geometry $\mathcal{N}$, i.e., $A \in [X]_{\sigma}$ if and only if $\overline{A} \in [\overline{X}]_{\tau}$.
 \end{proposition}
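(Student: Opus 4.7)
The plan is to derive the claim directly from Proposition \ref{co-co}, taking $\varphi := \sigma$ (the rank closure operator of the antimatroid $\mathcal{F}$) and $c := ex_{\tau}$ (the extreme-point operator of the complementary convex geometry $(E,\mathcal{N})$). For Proposition \ref{co-co} to apply I must check two things: first, that $\sigma$ and $ex_{\tau}$ are related by the violator/co-violator duality $c(X) = \overline{\varphi(\overline{X})}$; and second, that $[\overline{X}]_{ex_{\tau}}$ coincides with $[\overline{X}]_{\tau}$, since the latter is what appears in the statement.

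The first point is immediate from the preceding lemma, which asserts $\overline{\sigma(X)} = ex_{\tau}(\overline{X})$. Substituting $Y := \overline{X}$ this reads $ex_{\tau}(Y) = \overline{\sigma(\overline{Y})}$, which is precisely the relation that, via Proposition \ref{co-operator}, identifies $(E, ex_{\tau})$ as the co-violator space associated with the violator space $(E,\sigma)$. Here $(E,\sigma)$ is in fact a uniquely generated violator space by Theorem \ref{UG_A} together with the results of the preceding subsection on antimatroids.

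For the second point I appeal to the paragraph preceding Proposition \ref{outcast_u}, which observes that in any uniquely generated violator space the cospanning partition induced by the violator operator coincides with the cospanning partition induced by $ex$. Since $(E,\tau)$ is a convex geometry (hence a uniquely generated closure space, and thus a uniquely generated violator space), this yields $[\overline{X}]_{\tau} = [\overline{X}]_{ex_{\tau}}$ for every $X \subseteq E$. Concatenating these identifications with Proposition \ref{co-co} gives
\[
A \in [X]_{\sigma} \iff \overline{A} \in [\overline{X}]_{ex_{\tau}} \iff \overline{A} \in [\overline{X}]_{\tau},
\]
which is the desired equivalence. The only real obstacle is bookkeeping: one must verify that the preceding lemma delivers precisely the operator identity demanded by Proposition \ref{co-co}, and that unique generation of $\tau$ is available in order to replace $ex_{\tau}$-classes by $\tau$-classes; once both are in hand, the proof is a one-line composition of previously established bijections.
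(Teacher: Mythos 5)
Your proposal is correct and takes essentially the same route as the paper, which derives the proposition directly from Proposition \ref{co-co} together with the preceding lemma $\overline{\sigma(X)}=ex_{\tau}(\overline{X})$. Your two bookkeeping checks --- that this lemma exhibits $ex_{\tau}$ as the co-violator operator dual to $\sigma$ in the sense of Proposition \ref{co-operator}, and that $[\overline{X}]_{ex_{\tau}}=[\overline{X}]_{\tau}$ because a convex geometry is a uniquely generated violator space --- merely make explicit what the paper's one-line deduction leaves implicit.
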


 \begin{corollary}
 Let $(E,\mathcal{F})$ be an antimatroid. Then

 $X \subseteq Y \subseteq E \Rightarrow ex_{\sigma}(X) \subseteq ex_{\sigma}(Y)$
 \end{corollary}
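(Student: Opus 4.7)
The plan is to reduce the corollary to the isotonicity of the closure operator $\tau$ of the complementary convex geometry $(E,\mathcal{N})$, using the duality established in the lemma immediately above the corollary. That lemma, together with the identity $\overline{ex_{\sigma}(X)}=\tau(\overline{X})$ derived just before it, expresses $ex_{\sigma}$ entirely in terms of $\tau$ applied to complements.

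The steps I would carry out, in order, are the following. First, assume $X\subseteq Y\subseteq E$, and pass to complements to get $\overline{Y}\subseteq\overline{X}$. Second, invoke the fact that $\tau$, being the closure operator of the convex geometry $\mathcal{N}$, satisfies isotonicity (\textbf{C2}); applying it to the inclusion $\overline{Y}\subseteq\overline{X}$ yields $\tau(\overline{Y})\subseteq\tau(\overline{X})$. Third, rewrite each side using $\tau(\overline{X})=\overline{ex_{\sigma}(X)}$ and $\tau(\overline{Y})=\overline{ex_{\sigma}(Y)}$, obtaining $\overline{ex_{\sigma}(Y)}\subseteq\overline{ex_{\sigma}(X)}$. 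Taking complements once more reverses the inclusion and gives $ex_{\sigma}(X)\subseteq ex_{\sigma}(Y)$, as required.

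There is essentially no technical obstacle here: once the duality $\overline{ex_{\sigma}(X)}=\tau(\overline{X})$ is in hand, the corollary is a one-line consequence of the order-preservation of a closure operator. It is perhaps worth noting that this monotonicity is a genuinely antimatroid-specific phenomenon. For a general uniquely generated violator space the extreme-point operator $ex$ need not be isotone (in a matroid, for example, $ex(X)$ is $X$ when $X$ is independent and a proper subset otherwise, so $ex$ is not monotone). The reason monotonicity works for antimatroids is precisely that passing to complements converts $ex_{\sigma}$ into the closure operator of the dual convex geometry, which is automatically isotone.
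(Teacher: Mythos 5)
Your proof is correct and follows essentially the same route as the paper's: pass to complements, apply the isotonicity of $\tau$, and translate back via the identity $\overline{ex_{\sigma}(X)}=\tau(\overline{X})$. The only cosmetic difference is that you cite the lemma $\overline{\sigma(X)}=ex_{\tau}(\overline{X})$, which is not actually needed here --- only the basis identity $\overline{ex_{\sigma}(X)}=\tau(\overline{X})$ established just before that lemma is used.
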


 Indeed, $X \subseteq Y \Rightarrow \overline{Y} \subseteq \overline{X} \Rightarrow \tau(\overline{Y}) \subseteq \tau(\overline{X}) \Rightarrow \overline{\tau(\overline{X})} \subseteq \overline{\tau(\overline{Y})} \Rightarrow ex_{\sigma}(X) \subseteq ex_{\sigma}(Y)$.

 \section{Conclusion}

 Many combinatorial structures may be characterized by using operators defined on the elements of the structures. Thus matroids are described by a closure operator with exchange property, convex geometries are usually defined as a closure space with anti-exchange property. Greedoids may be described by some "closure operator" as well. Each set operator determines the partition of sets to equivalence classes with equal value of the operator and thus it generates the cospanning equivalence relation on sets. The cospanning relation associated with a closure operator of greedoids was introduced and investigated in \cite{Greedoids}, where it was proved that the relation determines the greedoid uniquely. We extended this approach to another combinatorial structures and obtain cospanning characterization of violator and co-violator spaces, for convex geometries, antimatroids, and matroids.

 It remains an open problem to characterize the cospanning partition and/or the
cospanning relation of closure spaces.

\end{document}